\def\End{\operatorname{End}}
\def\ker{\operatorname{ker}}
\def\im{\operatorname{Im}}
\def\dim{\operatorname{dim}}
\def\id{\operatorname{id}}
\def\C{\mathbb{C}}
\def\N{\mathbb{N}}
\def\CC{\mathcal{C}}
\def\Ext{\text{Ext}}
\def\sub{\subseteq}
\def\xto{\xrightarrow}
\newtheorem{thm}{Theorem}[section]
\newtheorem{cor}[thm]{Corollary}
\newtheorem{lemma}[thm]{Lemma}
\newtheorem{prop}[thm]{Proposition}
\theoremstyle{definition}
\theoremstyle{remark}
\newtheorem{remark}[thm]{Remark}
\numberwithin{equation}{section}
\begin{document}
	
	\title{Layers of the Coradical Filtration}
	
	\author[Alexander Sherman]{Alexander Sherman}

	\begin{abstract} Under suitably nice conditions, given a coalgebra object in a tensor category we compute the layers of its coradical (socle) filtration.  
\end{abstract}
	
	\maketitle
	\pagestyle{plain}


\section{Statement of Main Result}

Let $\Bbbk$ be a field and $\CC$ a semisimple pointed tensor category over $\Bbbk$ (precise definitions are given in \cref{sec_prelims}).  Recall that pointed means that every simple object of $\CC$ is invertible.  For instance $\CC$ could be the category of finite-dimensional (super) vector spaces.  Let $C$ be a coalgebra object in the cocompletion of $\CC$.  Then $C$ is a bicomodule over itself via its comultiplication morphism.  We prove a result on one aspect of this structure.

As a $C$-bicomodule, $C$ has an ascending Loewy series, i.e.~ its socle filtration:
\[
0=\sigma_0(C)\sub\sigma_1(C)\sub\sigma_2(C)\sub\cdots.
\]
This is often called the coradical filtration of $C$, and it is in fact a filtration of $C$ by coalgebras.  We seek to describe the layers of this filtration, as a bicomodule over $C$.  But we need a few assumptions on $C$.  

Before we state the assumptions, we recall a few constructions.  First, every simple right comodule $L$ of $C$ has an injective envelope $I(L)$, which is a right comodule that is an object of the cocompletion of $\CC$.  It too has a socle filtration $\sigma_{\bullet}(I(L))$ as a right comodule.  Next, given a right $C$-comodule $V$ and an object $S$ of $\CC$, the tensor product $S\otimes V$ has the natural structure of a right comodule.  Finally, if $V$ is a right $C$-comodule, then its left dual $V^*$ is a left $C$-comodule and if $W$ is a right comodule then the tensor product $V^*\otimes W$ has the natural structure of a bicomodule, which we denote by writing $V^*\boxtimes W$.  

Here are the assumptions we place on $C$: for the third assumption we fix $n\in\N$ with $n\geq 1$, meaning that this assumption depends on $n$.  It is possible that a given coalgebra $C$ only satisfies (C3$-n$) for certain $n$.  

\begin{enumerate}
	\item[\textbf{(C1)}] If $L$ is a simple right $C$-comodule and $S$ is a simple object of $\CC$, then $L$ and $S\otimes L$ are not isomorphic as comodules unless $S\cong\mathbf{1}$.  
	
	\item[\textbf{(C2)}] If $L,L'$ are simple right $C$-comodules then $L^*\boxtimes L'$ is a simple bicomodule and further every simple bicomodule is of this form up to isomorphism.
	
	\item[] \hspace{-2.6em}\textbf{(C3-}$n$\textbf{)} If $L,L'$ are simple right comodules, then, $[\sigma_n(I(L)):L']<\infty$.	
\end{enumerate}

Note that if $V$ and $L$ are right $C$-comodules of finite length, and $L$ is simple, we write $[V:L]$ number of times $L$ appears in a composition series of $V$. 

Finally, we observe that (C3-$n$) holds for all $n$ if for all simple right comodules $L,L'$ the following hold:
\begin{enumerate}
	\item[(a)] $\Ext^1(L,L')$ is finite-dimensional; and
	\item [(b)] for a fixed $L'$, $\Ext^1(L,L')$ vanishes for all but finitely many $L$ (up to isomorphism).
\end{enumerate} 
Indeed, in this case one can prove by induction that $\sigma_n(I(L))$ is of finite length for any simple module $L$: for $n=1$ it is clear, and we have the inequality:
\begin{eqnarray*}
	[\sigma_{n+1}(I(L))/\sigma_n(I(L)):L'']& \leq &\dim\Ext^1(L'',\sigma_n(I(L)))\\
	& \leq &\sum\limits_{[\sigma_n(I(L)):L']\neq 0}\dim\Ext^1(L'',(L')^{\oplus[\sigma_n(I(L)):L']}).
\end{eqnarray*}
The first inequality is clear, and the second equality follows from the fact that $\dim\Ext^1(L'',Z)\leq \dim\Ext^1(L'',X)+\dim\Ext^1(L'',Y)$ whenever we have a short exact sequence $0\to X\to Z\to Y\to 0$.  By our assumptions (a) and (b), the RHS is finite.  Thus
\[
[\sigma_{n+1}(I(L)):L'']\leq [\sigma_{n+1}(I(L))/\sigma_n(I(L)):L'']+[\sigma_n(I(L)):L'']<\infty.
\]
Our inequalities further show that $[\sigma_{n+1}(I(L)):L'']\neq0$ only if there exists simple right comodules $L_1,\dots,L_{n-1}$ such that $\Ext^1(L_1,L)\neq0,\Ext^1(L_2,L_1)\neq0,\dots,\Ext^1(L'',L_n)\neq0$.  By assumption (b), only finitely many simple right comodules satisfy this property. Thus $\sigma_{n+1}(I(L))$ will be of finite length.

\subsection{Main result} The main theorem we prove is:

\begin{thm} Assuming (C1)-(C3-$n$), if $i\leq n$ then for simple right comodules $L,L'$ we have
	\[
	[\sigma_i(C)/\sigma_{i-1}(C):L^*\boxtimes L']=[\sigma_i(I(L))/\sigma_{i-1}(I(L)):L'].
	\]
	In particular if (C3-$n$) holds for all $n$, then the above equality holds for all $i$.
\end{thm}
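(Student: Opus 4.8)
The plan is to reduce the computation of the bicomodule coradical layers to a one-sided computation, and then carry that out — most transparently after passing to the dual (pseudocompact) algebra $A=C^*$, where right $C$-comodules correspond to rational left $A$-modules and $C$-bicomodules to rational $A$-bimodules, though the same bookkeeping can be done directly with comodules in the cocompletion of $\CC$.

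First I would record the essentially classical fact that for a coalgebra the coradical filtration agrees, as a filtration, with the socle filtration of $C$ viewed as a right comodule, as a left comodule, and as a bicomodule: the coefficient coalgebra of a simple comodule is a simple subcoalgebra, so every simple subcomodule of $C$ lies in $C_0=\bigoplus_{[L]}M_L$ with $M_L\cong L^*\boxtimes L$ (using (C2)), after which one bootstraps via $\sigma_i=\wedge^i\sigma_1$. Next, $C=\mathbf 1\otimes C$ is cofree, hence injective as a right comodule, and its socle is essential, so as a right comodule $C$ is the injective envelope of $C_0$, i.e. $C\cong\bigoplus_{[L]}L^*\otimes I(L)$. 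Intersecting this with the previous sentence gives $\sigma_i(C)\cong\bigoplus_{[L]}L^*\otimes\sigma_i(I(L))$ as right comodules, hence
\[
\sigma_i(C)/\sigma_{i-1}(C)\ \cong\ \bigoplus_{[L]}L^*\otimes\big(\sigma_i(I(L))/\sigma_{i-1}(I(L))\big)
\]
as right comodules. This already proves the theorem after forgetting to right comodules; what remains is to upgrade right-comodule multiplicities to bicomodule multiplicities, which forces one to track the left coaction.

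To do that intrinsically, for a bicomodule $M$ put $\Phi_L(M)=\Hom_{C\text{-comod},\,\mathrm{left}}(L^*,M)$, a right comodule with $[\Phi_L(M):L']=[M:L^*\boxtimes L']$ for semisimple $M$ (the normalisation by endomorphism rings being controlled by (C1)–(C2)). Using the natural isomorphism $\Hom_{C\text{-comod},\,\mathrm{left}}(N,C)\cong N^*$, the injectivity of $C$ as a left comodule, and the fact that $\sigma_{i-1}(C)$ is a subcoalgebra containing the coefficient coalgebra of $L^*$, one obtains $\Phi_L\big(C/\sigma_{i-1}(C)\big)\cong\Ext^1_{C\text{-comod},\,\mathrm{left}}(L^*,\sigma_{i-1}(C))$, and since $\sigma_i(C)/\sigma_{i-1}(C)=\mathrm{soc}\big(C/\sigma_{i-1}(C)\big)$ this identifies the left side of the theorem; symmetrically the right side equals $\dim\Ext^1_{C\text{-comod},\,\mathrm{right}}(L',\sigma_{i-1}(I(L)))$. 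Then I would induct on $i$, the case $i=1$ being immediate. The inductive hypothesis says precisely that $\sigma_{i-1}(I(L))$ is the ``$L^*$-isotypic left part'' of the bicomodule $\sigma_{i-1}(C)$, and the two $\Ext^1$'s are matched most cleanly after passing to $A=C^*$: there $\sigma_i(C)$ is dual to $A/J(A)^i$ — here (C2) is exactly what forces $J(A\otimes A^{\mathrm{op}})=J(A)\otimes A^{\mathrm{op}}+A\otimes J(A)^{\mathrm{op}}$, so that the radical filtration of $A$ as an $A$-bimodule is $\{J(A)^i\}$ — while $I(L)$ is dual to the projective right $A$-module $e_LA$ for a primitive idempotent $e_L$, and both $[\sigma_i(C)/\sigma_{i-1}(C):L^*\boxtimes L']$ and $[\sigma_i(I(L))/\sigma_{i-1}(I(L)):L']$ become $\dim_k e_L\big(J(A)^{i-1}/J(A)^i\big)e_{L'}$, which is finite precisely by (C3-$i$).

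The main obstacle is this last step: one must make the passage to the dual algebra — pseudocompact algebras, rational modules, a complete set of primitive orthogonal idempotents, and the interaction of all of this with the grading category $\CC$, which is where (C1) enters, ruling out spurious isomorphisms $S\otimes L\cong L$ that would corrupt the multiplicity bookkeeping — precise enough that the transparent finite-dimensional idempotent argument above genuinely applies; equivalently, one carries out that bookkeeping directly with comodules in the cocompletion of $\CC$, and it is there that the hypotheses (C1)–(C3-$n$) all do their work.
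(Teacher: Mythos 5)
There is a genuine gap here, and you flag it yourself: the step that converts right-comodule information into bicomodule multiplicities is asserted, not carried out. Your first two paragraphs only recover the right-comodule statement $C\cong\bigoplus_\alpha L_\alpha^*\otimes I(L_\alpha)$ (the paper's corollary to \cref{socle_of_C}), and even there you lean on the claim that the coradical filtration, the one-sided socle filtrations, and the \emph{bicomodule} socle filtration of $C$ all coincide. In this tensor-category setting that coincidence is not ``essentially classical'': one inclusion needs (C2) via \cref{C3}, and the other is exactly what the paper's matrix-coefficient machinery (\cref{injectivity_res_lemma}, \cref{Loewy_length_c_V}, \cref{socle_filtration_C_general}) is built to prove, so a wedge bootstrap $\sigma_i=\wedge^i\sigma_1$ would have to be re-proved here, not cited. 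More importantly, knowing the layers as right comodules does not determine the numbers $[\,\cdot\,:L^*\boxtimes L']$: since the object $L_\alpha^*$ need not be simple in $\CC$, the underlying right comodule of a simple bicomodule $L_\alpha^*\boxtimes M$ involves the whole Picard orbit of $M$, and different bicomodule multiplicity vectors can yield identical right-comodule multiplicities. So the entire content of the theorem sits in your third paragraph.

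That paragraph is a programme rather than an argument. The chain $[\sigma_i(C)/\sigma_{i-1}(C):L^*\boxtimes L']=\dim_k e_L\bigl(J(A)^{i-1}/J(A)^i\bigr)e_{L'}=[\sigma_i(I(L))/\sigma_{i-1}(I(L)):L']$ requires: the duality between $\sigma_i(C)$ and the closure of $J(A)^i$ in the pseudocompact algebra $A=C^*$; a complete family of primitive orthogonal idempotents compatible with the $\CC$-grading; the statement that the radical of the (completed) enveloping algebra is $J\otimes A^{\mathrm{op}}+A\otimes J^{\mathrm{op}}$, which is where (C2) must be translated into an algebra fact and is not automatic for infinite-dimensional $A$; and a correction of raw $k$-dimensions by the graded endomorphism data, which is what (C1) is supposed to control. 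None of this is done, and your closing paragraph concedes it is ``the main obstacle.'' Similarly, your induction actually needs $\Hom_{\mathrm{left}}(L^*,\sigma_{i-1}(C))\cong\sigma_{i-1}(I(L))$ as right comodules, which is strictly stronger than the inductive hypothesis, since multiplicities of layers do not determine the comodule. For comparison, the paper never dualizes: it writes $\sigma_i(C)$ as a sum of images of matrix-coefficient morphisms via \cref{socle_filtration_specific} and then uses \cref{injectivity_res_lemma} to show the contribution of $H^i_{\alpha,L}$ to the $i$-th layer is exactly $L_\alpha^*\boxtimes(V_i/V_{i-1})$. The dual-algebra route you sketch is precisely the known alternative acknowledged in the paper's first footnote, but as written your proposal does not yet constitute a proof.
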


The following statement is clearly equivalent.

\begin{thm}\label{thm_2}
	Assuming (C1)-(C3-$n$), if $i\leq n$ then for simple right comodules $L,L'$ we have
	\[
	[\sigma_i(C):L^*\boxtimes L']=[\sigma_i(I(L)):L'].
	\]
	In particular if (C3-$n$) holds for all $n$, then the above equality holds for all $i$.
\end{thm}

In the case of $i=2$ we obtain a generalization of a corollary of the Taft-Wilson theorem for pointed coalgebras over a field.
\begin{cor} Assuming (C1)-(C2), if $L,L'$ are simple right comodules such that $\Ext^1(L,L')$ is finite-dimensional then we have
	\[
	[\sigma_2(C)/\sigma_1(C):L^*\boxtimes L']=\dim \operatorname{Ext}^1(L',L).
	\]
\end{cor}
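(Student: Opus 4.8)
The plan is to derive the corollary from the Theorem in the case $i=n=2$, combined with the standard homological fact that the second layer of the socle filtration of an injective envelope records $\operatorname{Ext}^1$. First I would assemble the input. Since $I(L)$ is the injective envelope of the simple right comodule $L$, the embedding $L\sub I(L)$ is essential, so $\sigma_1(I(L))=L$, and hence $\sigma_2(I(L))/\sigma_1(I(L))=\sigma_1\bigl(I(L)/L\bigr)$. The finiteness hypothesis on $\operatorname{Ext}^1$ is exactly the instance of (C3-$2$) that is needed for the pair $(L,L')$; inspecting the proof of the Theorem for $n=2$, this is the only place (C3-$2$) is used for this pair, since the multiplicity $[\sigma_2(C)/\sigma_1(C):L^*\boxtimes L']$ depends only on the $L$-isotypic part of the left structure, i.e.\ on $\sigma_2(I(L))$. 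Hence the Theorem applies and yields
\[
[\sigma_2(C)/\sigma_1(C):L^*\boxtimes L']=[\sigma_2(I(L))/\sigma_1(I(L)):L']=[\sigma_1(I(L)/L):L'].
\]

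Next I would compute the right-hand side. Applying $\Hom(L',-)$ to the short exact sequence $0\to L\to I(L)\to I(L)/L\to 0$ of right $C$-comodules gives an exact sequence whose relevant portion is
\[
\Hom(L',I(L))\longrightarrow\Hom(L',I(L)/L)\longrightarrow\operatorname{Ext}^1(L',L)\longrightarrow\operatorname{Ext}^1(L',I(L)).
\]
The last term vanishes because $I(L)$ is injective. Moreover the map $\Hom(L',I(L))\to\Hom(L',I(L)/L)$ is zero: any nonzero morphism from the simple comodule $L'$ to $I(L)$ is injective, so its image is a simple subcomodule of $I(L)$ and therefore lies in $\sigma_1(I(L))=L$, whence composing with $I(L)\to I(L)/L$ gives $0$. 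Therefore $\Hom(L',I(L)/L)\cong\operatorname{Ext}^1(L',L)$. Finally, any morphism out of the simple comodule $L'$ factors through the socle of its target, so $\Hom(L',I(L)/L)=\Hom(L',\sigma_1(I(L)/L))$, and for a semisimple comodule $M$ one has $\dim_k\Hom(L',M)=[M:L']\cdot\dim_k\End_C(L')$. Since $\End_C(L')=k$ in our setting (automatic, e.g., when $k$ is algebraically closed), combining these identifications with the display above gives
\[
[\sigma_2(C)/\sigma_1(C):L^*\boxtimes L']=\dim_k\operatorname{Ext}^1(L',L),
\]
as desired.

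The main obstacle is the first step rather than the computation: one must confirm that the weaker, pairwise finiteness hypothesis $\dim\operatorname{Ext}^1(L,L')<\infty$ genuinely suffices to invoke the Theorem at $n=2$ — that is, that the argument proving the Theorem localizes to the single pair $(L,L')$ and does not need the full strength of (C3-$2$) for all pairs of simples. Everything else — the identity $\sigma_2(I(L))/\sigma_1(I(L))=\sigma_1(I(L)/L)$, the long exact sequence, the vanishing $\operatorname{Ext}^1(L',I(L))=0$, the vanishing of $\Hom(L',I(L))\to\Hom(L',I(L)/L)$, and the bookkeeping $\dim_k\Hom(L',M)=[M:L']$ — is routine once $\End_C(L')=k$ is in hand, and the latter should itself be extracted from (C1)--(C2).
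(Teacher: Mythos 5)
Your proposal is correct and follows essentially the paper's own route: the paper states this corollary as an immediate consequence of the main theorem at $i=2$, with the identification $[\sigma_2(I(L))/\sigma_1(I(L)):L']=\dim\operatorname{Ext}^1(L',L)$ via the long exact sequence for $0\to L\to I(L)\to I(L)/L\to 0$ being exactly the standard computation you spell out. The two caveats you flag — that (C3-$2$) must be localized to the single pair $(L,L')$, and that one needs $\End(L')\cong k$ (and note the paper itself is inconsistent about the direction of $\operatorname{Ext}^1$ between its two statements of the corollary) — are likewise left implicit in the paper, so they are not gaps of your argument relative to the paper's.
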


The finiteness assumption in (C3-$n$) is clearly necessary in order to state the theorems.  The assumptions (C1) and (C2) are necessary for obtaining a clear description of the simple bicomodules of $C$.  If $A$ is a simple finite-dimensional $G$-graded algebra over an algebraically closed field of characteristic zero for a group $G$, and the center of $A$ contains a non-scalar element, then the assumptions (C1) and (C2) will fail for $C=A^*$ the dual coalgebra of $A$, as an object of $G$-graded vector spaces.

\subsection{Applications} The conditions (C1)-(C3) hold for many examples.  The case of original motivation and interest to the author appears in Section 6 of \cite{She}.  Namely, let $G$ be a quasireductive supergroup, that is one for which $G_0$ is reductive, and suppose that it has an even Cartan subgroup.  Then $G\times G$ acts on $G$ by left and right multiplication, and this induces an action of $G\times G$ on $\C[G]$.  One seeks a nice description of the structure of this $G\times G$-module; this is the natural generalization of the Peter-Weyl Theorem to the super setting.  Note that the structure of $\C[G]$ as a $G$-module, where $G$ acts by left translation, was given in \cite{Ser}.

The above situation is exactly given by the setup of this paper, where $\CC$ is the category of finite-dimensional super vector spaces over $\C$ and $C$ is the coalgebra $\C[G]$ ($\C[G]$ is in fact a Hopf algebra).  In this case, from \cref{thm_2}, one obtains a beautiful description of the Loewy layers of $\C[G]$ viewed as a $G\times G$-module.

More generally, if $\CC$ is the category of finite-dimensional vector spaces, then (C1) automatically holds, and if $C$ is a coalgebra over an algebraically closed field $\Bbbk$, then (C1)-(C2) hold\footnote{Thank you to Nicolás Andruskiewitsch for explaining why this is true.}.  More generally, if $G$ is a group, $\Bbbk$ an algebraically closed field of characteristic zero or characteristic $p$ where $p$ is coprime to the order of each finite subgroup of $G$, and $\CC$ is the category of $G$-graded vector spaces over $\Bbbk$, then (C1) and (C2) become equivalent.  This follows as a corollary of the main results of \cite{BZS}, that a finite-dimensional $G$-graded simple algebra $B$ is a matrix algebra over $\Bbbk$ if and only if the center of $B$ is $\Bbbk$.

\subsection{Outline of paper} In Section 2 we state formal constructions related to coalgebras and comodules in tensor categories, with \cite{EGNO} being our main reference.  In Section 3 we state basic results about the matrix coefficient morphism.  Section 4 goes into the existence and structure of injective comodules, and Section 5 explains the structure of the coalgebra as a right comodule.  The statements and proofs of these results are known and go back to \cite{Ser}.  Finally Section 6 examines the structure of $C$ as a bicomodule, concluding with \cref{main_thm}.

\subsection{Acknowledgments}  The author would like to thank his advisor, Vera Serganova, for many helpful discussions.  This research was partially supported by NSF grant DMS-1701532.  

\section{Setup and Preliminaries}\label{sec_prelims}

\subsection{} We follow the definitions and terminology from \cite{EGNO}.  Let $\Bbbk$ be a field and $\CC$  
a semisimple pointed tensor category over $\Bbbk$.  In other words we assume: 
\begin{enumerate}
	\item $\CC$ is a locally finite semisimple $\Bbbk$-linear abelian category;
	\item $\CC$ is rigid monoidal such that $(-)\otimes(-)$ is a biexact bilinear bifunctor, and $\End(\mathbf{1})\cong \Bbbk$;
	\item every simple object of $\CC$ is invertible.
\end{enumerate}
By Thm.~ 2.11.5 of \cite{EGNO}, such categories are always isomorphic (as monoidal categories) to a category $\mathsf{vec}(G,\omega)$, the category of finite-dimensional $G$-graded vector spaces (where $G$ is a group) with associativity isomorphism determined by the 3-cocycle $\omega\in Z^3(G,\Bbbk^\times)$.  Note that we do not assume $(\CC,\otimes)$ is braided. 

\subsection{} For an object $V$ of $\CC$, we write $V^*$ for its left dual, $\text{ev}_V:V^*\otimes V\to\mathbf{1}$ for the evaluation morphism and $\text{coev}_V:\mathbf{1}\to V\otimes V^*$ for the coevaluation morphism.  If $W$ is a subobject of $V$, we write $W^\perp$ for the subobject of $V^*$ given by the kernel of the epimorphism $V^*\to W^*$.  If $f:W\to V$ is an arbitrary morphism then we have a commutative diagram which will be used later on:
\begin{equation}\label{square}
\xymatrix{W^*\otimes W \ar[r]^{\text{ev}_W} & \mathbf{1}\\ V^*\otimes W \ar[u]^{f^*\otimes 1}\ar[r]_{1\otimes f} & V^*\otimes V\ar[u]^{\text{ev}_V}} 
\end{equation}

\subsection{} We consider the cocomplete abelian category $\hat{\CC}$ constructed from $\CC$, as described in \cite{Sta}.  Note that here if $\CC\cong\mathsf{vec}(G,\omega)$, then $\hat{\CC}\cong\mathbf{Vec}(G,\omega)$ which is the category of $G$-graded vector spaces of arbitrary dimension.   We have a fully faithful embedding $\CC\to\hat{\CC}$ admitting the usual universal property.  Further, in this case $\hat{\CC}\times\hat{\CC}$ is a cocomplete abelian category with a natural fully faithful functor $\CC\times\CC\to\hat{\CC}\times\hat{\CC}$ that satisfies the desired universal property.    Thus in particular $\otimes$ extends to a biexact bilinear functor $\hat{\CC}\times\hat{\CC}\to\hat{\CC}$ which we continue to write as $\otimes$ by abuse of notation.

\subsection{} Let $C$ be a coalgebra object in $\hat{\CC}$.  This means $C$ comes equipped with morphisms $\Delta:C\to C\otimes C$ and $\epsilon:C\to \mathbf{1}$ such that
\[
(\Delta\otimes \id_{C})\circ\Delta=(\id_{C}\otimes\Delta)\circ\Delta, \ \ \ \ \ (\epsilon\otimes \id_{C})\circ\Delta=(\id_{C}\otimes\epsilon)\circ\Delta=\id_{C}.
\]
By thinking of $\hat{\CC}$ as $\mathbf{Vec}(G,\omega)$, a standard argument shows that $\hat{\CC}$ is a direct limit of subcoalgebras objects of $\CC$.

\subsection{} An object $V\in\hat{\CC}$ is said to be a right $C$-comodule (resp. left $C$-comodule) if it is equipped with a morphism $a_V=a:V\to V\otimes C$ (resp. $a_V=a:V\to C\otimes V$) such that 
\[
(a\otimes\id_C)\circ a=(\id_V\otimes\Delta)\circ a, \ \ \ \ \ (\text{resp. } \ (\id_C\otimes a)\circ a=(\Delta\otimes\id_V)\circ a )  
\]
and
\[
(\id_V\otimes\epsilon)\circ a=\id_V, \ \ \ \ \ (\text{resp. } \ (\epsilon\otimes \id_V)\circ a=\id_V).
\]
An object $V\in\hat{\CC}$ is a $C$-bicomodule if it is both a left and right comodule with comodule structure morphisms $a_{V,l}$ and $a_{V,r}$ such that $(\id_C\otimes a_{V,r})\circ a_{V,l}=(a_{V,l}\otimes\id_C)\circ a_{V,r}$.  Observe that $C$ is naturally a left and right comodule via $a_{C,r}=a_{C,l}=\Delta$, such that it obtains the structure of a $C$-bicomodule.

Again by a standard argument, any $C$-(bi)comodule $V$ will be a sum of sub-(bi)comodule objects in $\CC$.  In particular, simple (bi)comodules are always objects of $\CC$.

\subsection{} Consider the category $\mathbf{Mod}_C$ (resp. $_C\mathbf{Mod}$) of right $C$-comodules (resp. left $C$-comodules) with morphisms between two objects $V,W$ being morphisms in $\hat{\CC}$ respecting comodule structure morphisms.  Let $_C\mathsf{mod}$ (resp. $\mathsf{mod}_C$) denote the full subcategory of right $C$-comodules (resp. left $C$-comodules) in $\CC$.  We also have the categories $_C\mathbf{Mod}_C$ and $_C\mathsf{mod}_C$ of $C$-bicomodules in $\hat{\CC}$ and $\CC$ respectively. By our assumption that $\otimes$ is biexact, these categories are all abelian.  Further, $_C\mathsf{mod}$, $\mathsf{mod}_C$ and $_C\mathsf{mod}_C$ are locally finite, and thus the Jordan-Holder and Krull-Schmidt theorems are valid.  The categories $_C\mathbf{Mod}$, $\mathbf{Mod}_C$ and $_C\mathbf{Mod}_C$ are cocomplete, and we have natural inclusion functors $\mathsf{mod}_C\to \mathbf{Mod}_C$, $_C\mathsf{mod}\to {_C\mathbf{Mod}}$, and $_C\mathsf{mod}_C\to {_C\mathbf{Mod}}_C$ that have the usual universal properties as cocompletions.

\subsection{} \label{tensor_product_structure_trivial} Given a right (resp. left) $C$-comodule $V$ and an object $S\in\hat{\CC}$, we may construct a new right (resp. left) $C$-comodule $S\otimes V$ (resp. $V\otimes S$) with comodule morphism $a_{S\otimes V}=\id_S\otimes a_V$ (resp. $a_{V\otimes S}=a_V\otimes \id_S$).  This defines an endofunctor of the categories $\mathbf{Mod}_C$ and $_C\mathbf{Mod}$, and it preserves $\mathsf{mod}_C$ and $_C\mathsf{mod}$ if $S$ is in $\CC$.  We observe that if $S$ is simple (and thus invertible) then this functor defines automorphisms of these abelian categories, and thus it takes simple comodules to simple comodules.

\subsection{} \label{bicomodule_construction} Given a right $C$-comodule $V$ and left $C$-comodule $W$ we may construct a $C$-bicomodule $V\boxtimes W$, which is $V\otimes W$ as an object of $\hat{\CC}$ and has left and right comodule structures as described in \ref{tensor_product_structure_trivial}.  This satisfies the necessary commutativity condition to be a bicomodule. 

\begin{lemma}\label{associativity_bicom_iso}
	Suppose that $V$ is a right $C$-comodule, $W$ a left $C$-comodule, and $S$ is an object of $\hat{\CC}$.  Then we have a canonical isomorphism of bicomodules
	\[
	(W\otimes S)\boxtimes V\cong W\boxtimes(S\otimes V).
	\]
\end{lemma}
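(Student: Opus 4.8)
The plan is to show that the associativity constraint of $\hat\CC$ already does the job. Write $\alpha = \alpha_{W,S,V}\colon (W\otimes S)\otimes V\to W\otimes(S\otimes V)$ for the associator of the monoidal category $\hat\CC$. Since the underlying objects of $(W\otimes S)\boxtimes V$ and $W\boxtimes(S\otimes V)$ are $(W\otimes S)\otimes V$ and $W\otimes(S\otimes V)$ respectively, and $\alpha$ is an isomorphism in $\hat\CC$, it suffices to check that $\alpha$ is a morphism of left comodules and a morphism of right comodules.

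For this I would first spell out the four structure morphisms explicitly from the definitions in \ref{tensor_product_structure_trivial} and \ref{bicomodule_construction}. On $(W\otimes S)\boxtimes V$ the left coaction is $a_{W\otimes S}\otimes\id_V$ post-composed with a suitable associativity constraint, where $a_{W\otimes S}$ is in turn $a_W\otimes\id_S$ post-composed with an associativity constraint; the right coaction is $\id_{W\otimes S}\otimes a_V$ post-composed with an associativity constraint. On $W\boxtimes(S\otimes V)$ the left coaction is $a_W\otimes\id_{S\otimes V}$ post-composed with an associativity constraint; the right coaction is $\id_W\otimes a_{S\otimes V}$ post-composed with an associativity constraint, where $a_{S\otimes V}$ is $\id_S\otimes a_V$ post-composed with an associativity constraint. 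In each of the resulting commuting-square checks, every edge is built solely from associativity constraints of $\hat\CC$ together with one occurrence of $a_W\otimes(-)$ (for the left check) or $(-)\otimes a_V$ (for the right check).

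The verification then reduces to two ingredients: naturality of $\alpha$ with respect to the morphisms $a_W\colon W\to C\otimes W$ and $a_V\colon V\to V\otimes C$ (applied with identities in the remaining slots), which lets one push these coactions through the associators; and the pentagon axiom, which via Mac Lane coherence forces the remaining purely-associator subdiagrams to commute. Carrying out the left-comodule check and the right-comodule check separately — they are formally dual — then closes the argument. The only real subtlety is bookkeeping: because $\hat\CC\cong\mathbf{Vec}(G,\omega)$ carries the genuinely nontrivial associativity constraint coming from the $3$-cocycle $\omega$, one cannot simply identify the two bicomodules on the nose, so care is needed in tracking where each associator is inserted; no idea beyond coherence and naturality is required.
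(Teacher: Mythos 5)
Your proposal is correct and follows exactly the route the paper takes: the paper's proof simply asserts that the associativity constraint of $\hat{\CC}$ furnishes the isomorphism, and your naturality-plus-coherence verification is the (correct) expansion of that one-line argument.
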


\begin{proof}
	Indeed, the associativity isomorphism coming from the monoidal structure of $\hat{\CC}$ provides us with such an isomorphism.
\end{proof}
\begin{cor}\label{cancellation_bicom_iso}
With the same hypotheses as \cref{associativity_bicom_iso} and assuming that $S$ is a simple object of $\CC$, we have a canonical isomorphism
\[
(W\otimes S^*)\boxtimes (S\otimes V)\cong W\boxtimes V.
\]
\end{cor}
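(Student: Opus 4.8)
The plan is to reduce everything to \cref{associativity_bicom_iso} together with the fact that a simple object of $\CC$ is invertible. First I would apply \cref{associativity_bicom_iso} with the right comodule there taken to be $S\otimes V$ and the object $S$ of that lemma replaced by $S^*$; this yields a canonical bicomodule isomorphism
\[
(W\otimes S^*)\boxtimes(S\otimes V)\;\cong\;W\boxtimes\bigl(S^*\otimes(S\otimes V)\bigr).
\]
So it suffices to produce a canonical isomorphism of right comodules $S^*\otimes(S\otimes V)\cong V$ and then apply the functor $W\boxtimes(-)$, which manifestly carries isomorphisms of right comodules to isomorphisms of bicomodules.

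For the right comodule isomorphism, the associativity constraint of $\hat{\CC}$ gives an isomorphism $S^*\otimes(S\otimes V)\cong(S^*\otimes S)\otimes V$ in $\hat{\CC}$; since by \ref{tensor_product_structure_trivial} the right comodule structure on both objects is $a_V$ applied in the final tensor slot with the identity on the $S$-factors, this associativity isomorphism automatically respects the comodule structure morphisms. Next, because $S$ is simple in $\CC$ it is invertible, and hence the evaluation morphism $\text{ev}_S\colon S^*\otimes S\to\mathbf{1}$ is an isomorphism in $\CC$, and thus in $\hat{\CC}$. Tensoring on the right with $V$ and composing with the left unit constraint $\mathbf{1}\otimes V\cong V$ gives an isomorphism $(S^*\otimes S)\otimes V\cong V$; again this is a morphism of right comodules because the comodule structure only involves $a_V$ in the final tensor factor, which is untouched by $\text{ev}_S\otimes\id_V$ and by the unit constraint.

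Composing, we get $W\boxtimes\bigl(S^*\otimes(S\otimes V)\bigr)\cong W\boxtimes V$ canonically, and composing this with the first isomorphism proves the corollary. The only point requiring any care is the claim that $\text{ev}_S$ is an isomorphism for invertible $S$ (equivalently $S^*\otimes S\cong\mathbf{1}$), which is a standard fact in rigid monoidal categories, so I do not expect a genuine obstacle: the argument is essentially a bookkeeping exercise checking that the evident isomorphisms of underlying objects in $\hat{\CC}$ are compatible with the relevant (bi)comodule structure morphisms.
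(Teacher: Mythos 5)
Your argument is correct and follows exactly the paper's proof: apply \cref{associativity_bicom_iso} (with $S^*$ in place of $S$ and $S\otimes V$ in place of $V$) and then use the invertibility isomorphism $S^*\otimes S\cong\mathbf{1}$. The extra verification that the underlying isomorphisms in $\hat{\CC}$ respect the comodule structure morphisms is the routine bookkeeping the paper leaves implicit.
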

\begin{proof}
	Apply \cref{associativity_bicom_iso} and the invertibility isomorphism $S^*\otimes S\cong\mathbf{1}$.  
\end{proof}

\subsection{} \label{dual_construction}  Let $V$ be an object in $\mathsf{mod}_C$ and $V^*$ its left dual in $\CC$.  Then $V^*$ has the natural structure of a left $C$-comodule by 
\[
a_{V^*}=(\operatorname{ev}_V\otimes \id_C\otimes\id_{V^*})\circ (\id_{V^*}\otimes a_V\otimes \id_{V^*})\circ(\id_{V^*}\otimes\operatorname{coev}_V).
\]
This construction is functorial, so that we have a contravariant functor $(-)^*:\mathsf{mod}_C\to{_C\mathsf{mod}}$.  This functor is an antiequivalence with inverse taking the right dual of a comodule, $V\mapsto{^*V}$.  We observe that if $W$ is a right subcomodule of $V$ then $W^\perp$ is naturally a left subcomodule of $V^*$.  

\section{Matrix Coefficients}

\subsection{} For this section, all objects are assumed to be in $\CC$, i.e.~ they are of finite length.  Given an object $V$ of $\mathsf{mod}_C$, by \ref{dual_construction} and \ref{bicomodule_construction} we obtain a $C$-bicomodule given by $V^*\boxtimes V$.  Define the matrix coefficients morphism $c_V:V^*\boxtimes V\to C$ by
\[
c_V=(\operatorname{ev}_V\otimes \id_{C})\circ (\id_{V^*}\otimes a_V)=(\id_C\otimes\operatorname{ev}_V)\circ (a_{V^*}\otimes\id_V).
\]
\begin{lemma}\label{matrix_coeff_square}
	Suppose that $f:W\to V$ is a morphism of right $C$-comodules.  Then we have the following commutative diagram:
	\[
	\xymatrix{ V^*\boxtimes V \ar[r]^{c_V} & C\\ V^*\boxtimes W \ar[u]^{f^*\otimes 1}\ar[r]_{1\otimes f} & W^*\boxtimes W\ar[u]^{c_W}}
	\]
\end{lemma}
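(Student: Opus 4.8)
The plan is to unwind the definition of the matrix coefficient morphisms along both legs of the square, use that $f$ is a morphism of right comodules, and reduce the whole identity to the elementary square \eqref{square}. Throughout I will suppress the associativity and unit constraints of $\hat{\CC}$; this is legitimate by MacLane coherence, and one could equally well run the argument inside a strictification of $\hat{\CC}$ so that no structural isomorphisms appear at all.

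First I would expand the composite along the top, $V^*\boxtimes W \to V^*\boxtimes V \xrightarrow{c_V} C$. The left vertical arrow is $\id_{V^*}\boxtimes f$, which on underlying objects of $\hat{\CC}$ is $\id_{V^*}\otimes f$; using $c_V=(\operatorname{ev}_V\otimes\id_C)\circ(\id_{V^*}\otimes a_V)$, the composite becomes $(\operatorname{ev}_V\otimes\id_C)\circ\big(\id_{V^*}\otimes(a_V\circ f)\big)$. Since $f$ is a morphism of right comodules, $a_V\circ f=(f\otimes\id_C)\circ a_W$, and then functoriality of $\otimes$ rewrites the composite as $\big(\operatorname{ev}_V\circ(\id_{V^*}\otimes f)\big)\otimes\id_C$ precomposed with $\id_{V^*}\otimes a_W$.

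Next I would expand the composite along the bottom, $V^*\boxtimes W \to W^*\boxtimes W \xrightarrow{c_W} C$. The bottom arrow is $f^*\otimes\id_W$, where $f^*\colon V^*\to W^*$ is the dual of $f$ (so that this arrow is, on underlying objects, exactly the left vertical map of \eqref{square}). Using $c_W=(\operatorname{ev}_W\otimes\id_C)\circ(\id_{W^*}\otimes a_W)$ and the interchange law for the two maps $f^*$ and $a_W$, which act on disjoint tensor factors, this composite becomes $\big(\operatorname{ev}_W\circ(f^*\otimes\id_W)\big)\otimes\id_C$ precomposed with $\id_{V^*}\otimes a_W$.

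Comparing the two expressions, the lemma reduces to the equality of morphisms $V^*\otimes W\to\mathbf{1}$ given by $\operatorname{ev}_V\circ(\id_{V^*}\otimes f)=\operatorname{ev}_W\circ(f^*\otimes\id_W)$, which is precisely the commutativity of \eqref{square}. Since the four arrows of the displayed square are already morphisms of bicomodules and the forgetful functor ${}_C\mathbf{Mod}_C\to\hat{\CC}$ is faithful, it suffices to check this identity in $\hat{\CC}$, as above. I do not expect a genuine obstacle here: the only delicate point is verifying that the ``interchange'' and ``functoriality of $\otimes$'' manipulations, together with the passage to \eqref{square}, are valid in the non-strict setting, and this is guaranteed by coherence because every auxiliary isomorphism involved is structural.
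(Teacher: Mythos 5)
Your proof is correct and follows essentially the same route as the paper: the paper pastes together three commuting squares (an interchange square, a square expressing that $f$ is a comodule morphism, and the square \eqref{square} tensored with $C$), which are precisely your three equational manipulations. Nothing further is needed.
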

\begin{proof}
	Indeed, this follows from the commutativity of the following diagram:
	\[
		\xymatrix{W^*\otimes W \ar[rr]^{\id_{W^*}\otimes a_W}& & W^*\otimes W\otimes C \ar[rr]^{\text{ev}_W\otimes\id_C} && C \\ \\
			& & V^*\otimes W\otimes C \ar[uu]\ar[rr]  & & V^*\otimes V\otimes C\ar[uu]_{\text{ev}_V\otimes\id_C}\\ \\
		V^*\otimes W\ar[rrrr]\ar[uuuu]\ar[uurr]^{\id_{V^*}\otimes a_W} & && & V^*\otimes V\ar[uu]_{\id_{V^*}\otimes a_V}}
	\]
	The top left square is obviously commutative.  The bottom right square is commutative because $f:W\to V$ is a morphism of comodules.  The top right square is simply (\ref{square}) tensored with $C$, and thus is commutative.  
\end{proof}
\begin{cor}\label{ker_of_c}
	Suppose that $V$ is a right $C$-comodule, with $W$ a sub-comodule of $V$.  Then $W^\perp\boxtimes W\sub\ker c_V$.
\end{cor}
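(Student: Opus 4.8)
\emph{Proof proposal.} The plan is to obtain this as an immediate consequence of \cref{matrix_coeff_square} applied to the inclusion morphism $f:W\into V$, combined with biexactness of $\otimes$.

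First I would instantiate \cref{matrix_coeff_square} with $f$ the canonical monomorphism $W\into V$. By \ref{dual_construction} and the conventions of Section 2, the induced map $f^*:V^*\to W^*$ is precisely the defining epimorphism with kernel $W^\perp$, the left vertical arrow of the square is $\id_{V^*}\otimes f$, and the bottom horizontal arrow is $f^*\otimes\id_W$. Thus the square of \cref{matrix_coeff_square} reads $c_V\circ(\id_{V^*}\otimes f)=c_W\circ(f^*\otimes\id_W)$ as morphisms $V^*\boxtimes W\to C$.

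Next I would record two consequences of the exactness of $(-)\otimes(-)$. Applying $(-)\otimes W$ to the short exact sequence $0\to W^\perp\to V^*\to W^*\to 0$ shows that $W^\perp\boxtimes W=\ker(f^*\otimes\id_W)$ as a sub-bicomodule of $V^*\boxtimes W$; and applying $(-)\otimes W$ to the monomorphism $W^\perp\into V^*$, together with the fact that $f$ is monic, shows that the composite $W^\perp\boxtimes W\into V^*\boxtimes W\xto{\ \id_{V^*}\otimes f\ }V^*\boxtimes V$ is a monomorphism. Hence $W^\perp\boxtimes W$ may be regarded as a sub-bicomodule of $V^*\boxtimes V$ in the evident way, and it is against this identification that the claim $W^\perp\boxtimes W\sub\ker c_V$ is to be read.

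Finally I would chase the diagram: restricting $c_V$ along $W^\perp\boxtimes W\sub V^*\boxtimes V$ yields $c_V\circ(\id_{V^*}\otimes f)|_{W^\perp\boxtimes W}$, which by the square of \cref{matrix_coeff_square} equals $c_W\circ(f^*\otimes\id_W)|_{W^\perp\boxtimes W}$; but $(f^*\otimes\id_W)|_{W^\perp\boxtimes W}=0$, since $W^\perp\boxtimes W$ is by construction the kernel of $f^*\otimes\id_W$. Therefore $c_V$ vanishes on $W^\perp\boxtimes W$, i.e. $W^\perp\boxtimes W\sub\ker c_V$. I do not expect any genuine obstacle here beyond bookkeeping how subobjects transform under the exact functor $\otimes$; all the substantive content is already contained in \cref{matrix_coeff_square}.
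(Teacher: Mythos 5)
Your argument is correct and is exactly what the paper's one-line proof (``clear from the previous lemma'') leaves implicit: apply \cref{matrix_coeff_square} to the inclusion $W\into V$, identify $W^\perp\boxtimes W$ as $\ker(f^*\otimes\id_W)$ using exactness of $\otimes$, and chase the square. No issues.
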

\begin{proof}
	Clear from previous lemma.
\end{proof}

\begin{lemma}\label{inj_surj}
	Suppose that $V$ is a right $C$-comodule, with $W$ a sub-comodule of $V$ and $U$ a quotient of $V$.  Then $\im c_W$ and $\im c_U$ are sub-bicomodules of $\im c_V$.  
\end{lemma}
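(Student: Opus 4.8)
The plan is to deduce both statements directly from \cref{matrix_coeff_square}, applied to the structure morphisms $\iota\colon W\hookrightarrow V$ and a projection $\pi\colon V\onto U$, using only that $\otimes$ is biexact and that right duality turns monomorphisms into epimorphisms and vice versa. Recall first that, since $c_V$, $c_W$, $c_U$ are morphisms of bicomodules, their images are sub-bicomodules of $C$; so it suffices in each case to prove the containment $\im c_W\sub\im c_V$, respectively $\im c_U\sub\im c_V$, as subobjects of $C$.

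For the sub-comodule $W$, apply \cref{matrix_coeff_square} to $f=\iota$ to obtain the commutative square
\[\xymatrix{ V^*\boxtimes V \ar[r]^{c_V} & C\\ V^*\boxtimes W \ar[u]^{\id_{V^*}\otimes\iota}\ar[r]_{\iota^*\otimes\id_W} & W^*\boxtimes W\ar[u]^{c_W}}\]
Since $\iota$ is a monomorphism, its right dual $\iota^*\colon V^*\to W^*$ is an epimorphism, and hence, by biexactness of $\otimes$, the bottom arrow $\iota^*\otimes\id_W$ is an epimorphism of bicomodules. Therefore $\im c_W=\im\!\big(c_W\circ(\iota^*\otimes\id_W)\big)$, and by commutativity this equals $\im\!\big(c_V\circ(\id_{V^*}\otimes\iota)\big)\sub\im c_V$, as desired.

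For the quotient $U$, apply \cref{matrix_coeff_square} instead with $f=\pi\colon V\to U$ (that is, with the lemma's ``$W$'' taken to be $V$ and its target taken to be $U$), which yields the commutative square
\[\xymatrix{ U^*\boxtimes U \ar[r]^{c_U} & C\\ U^*\boxtimes V \ar[u]^{\id_{U^*}\otimes\pi}\ar[r]_{\pi^*\otimes\id_V} & V^*\boxtimes V\ar[u]^{c_V}}\]
Now $\pi$ is an epimorphism, so $\id_{U^*}\otimes\pi$ is an epimorphism by biexactness, whence $\im c_U=\im\!\big(c_U\circ(\id_{U^*}\otimes\pi)\big)=\im\!\big(c_V\circ(\pi^*\otimes\id_V)\big)\sub\im c_V$. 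This completes the argument. There is no genuine obstacle here; the only points meriting care are the duality bookkeeping (that a mono dualizes to an epi and an epi to a mono, and that $\otimes$ preserves each, so the horizontal bottom arrows above are surjective) and the observation that \cref{matrix_coeff_square} may legitimately be fed a morphism \emph{out} of $V$ by relabelling. Everything else is formal.
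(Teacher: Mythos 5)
Your proof is correct and follows essentially the same route as the paper, which simply applies the commutative square of \cref{matrix_coeff_square} to the two morphisms $W\to V$ and $V\to U$; you have merely made explicit the duality/biexactness bookkeeping (mono dualizes to epi, $\otimes$ preserves epis) that the paper leaves implicit. Nothing further is needed.
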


\begin{proof}
	Apply the commutative squares obtained from Lemma \ref{matrix_coeff_square}	for the two morphisms $W\to V$ and $V\to U$.  
\end{proof}
\begin{cor}
If $V$ is a right $C$-comodule and $W$ is a subquotient of $V$ as a comodule, then $\im c_W$ is a sub-bicomodule of $\im c_V$.  
\end{cor}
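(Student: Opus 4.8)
The plan is to reduce the statement to the already-established \cref{inj_surj} by factoring the subquotient as a quotient of a subobject. Recall that $W$ being a subquotient of $V$ as a right $C$-comodule means that there are sub-comodules $V''\sub V'\sub V$ together with an isomorphism $W\cong V'/V''$ of comodules.

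First I would invoke \cref{inj_surj} for the inclusion $V'\hookrightarrow V$, which gives that $\im c_{V'}$ is a sub-bicomodule of $\im c_V$. Then I would invoke \cref{inj_surj} once more, this time for the quotient map $V'\onto W\cong V'/V''$, which gives that $\im c_W$ is a sub-bicomodule of $\im c_{V'}$. Composing these two containments --- using that a sub-bicomodule of a sub-bicomodule of $\im c_V$ is again a sub-bicomodule of $\im c_V$ --- yields the claim.

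I do not anticipate any genuine obstacle here: the corollary is essentially a transitivity observation layered on top of \cref{inj_surj}. The only subtlety worth recording is that $\im c_W$ must be understood via the canonical identification $W\cong V'/V''$, and the fact that the resulting sub-bicomodule of $C$ does not depend on the chosen presentation of $W$ as a subquotient follows from the functoriality of $c_{(-)}$ established in \cref{matrix_coeff_square} (equivalently, one could instead factor $W$ as a sub-comodule of a quotient $V/V''$ and argue symmetrically).
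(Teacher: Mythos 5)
Your proof is correct and is exactly the argument the paper intends: the corollary is stated without proof precisely because it follows immediately from \cref{inj_surj} by writing the subquotient $W\cong V'/V''$ as a quotient of the subcomodule $V'\sub V$ and composing the two containments $\im c_W\sub\im c_{V'}\sub\im c_V$. Your remark on independence of the chosen presentation, via \cref{matrix_coeff_square}, is a reasonable extra precaution but not needed beyond what the paper uses.
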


\begin{lemma}
	Let $V$ be a right $C$-comodule, and suppose that $W_1,W_2$ are subcomodules such that $W_1+W_2=V$.  Then $\im c_V=\im c_{W_1}+\im c_{W_2}$. Similarly, if $U_1,U_2$ are quotients comodules of $V$ such that the map $V\to U_1\oplus U_2$ is injective, then $\im c_V=\im c_{U_1}+\im c_{U_2}$.  
\end{lemma}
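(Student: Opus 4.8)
The plan is to deduce both assertions from the functoriality square of \cref{matrix_coeff_square} together with the biexactness of $\otimes$. In each case the inclusion $\supseteq$ is already free: \cref{inj_surj} gives $\im c_{W_i}\sub\im c_V$ and $\im c_{U_i}\sub\im c_V$, hence $\im c_{W_1}+\im c_{W_2}\sub\im c_V$ and $\im c_{U_1}+\im c_{U_2}\sub\im c_V$. So only the reverse inclusions require work.

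For the statement about subcomodules, write $\iota_i\colon W_i\hookrightarrow V$ for the inclusions. Applying \cref{matrix_coeff_square} to $\iota_i$ produces a commutative square which exhibits the composite $c_V\circ(V^*\boxtimes W_i\to V^*\boxtimes V)$ — where the second arrow is the map induced by $\iota_i$ — as factoring through $c_{W_i}\colon W_i^*\boxtimes W_i\to C$; in particular its image is contained in $\im c_{W_i}$. Now $W_1+W_2=V$ says exactly that the sum map $W_1\oplus W_2\to V$ is an epimorphism, so tensoring on the left by $V^*$, which is exact, shows that the images of $V^*\boxtimes W_1$ and $V^*\boxtimes W_2$ in $V^*\boxtimes V$ add up to all of $V^*\boxtimes V$. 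Since a morphism sends the sum of two subobjects of its source to the sum of their images, applying $c_V$ yields $\im c_V\sub\im c_{W_1}+\im c_{W_2}$.

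For the statement about quotients I would run the dual argument. Write $q_i\colon V\onto U_i$ for the quotient maps. Applying \cref{matrix_coeff_square} with $f=q_i$ (so ``$W$'' there is $V$ and ``$V$'' there is $U_i$) shows that the composite $c_V\circ(U_i^*\boxtimes V\to V^*\boxtimes V)$, where the second arrow is induced by $q_i^*\colon U_i^*\to V^*$, factors through $c_{U_i}$ and so has image in $\im c_{U_i}$. The hypothesis that $V\to U_1\oplus U_2$ is a monomorphism, together with the fact that $(-)^*$ is an exact antiequivalence on comodules in $\CC$ (see \ref{dual_construction}) and preserves finite direct sums, shows that $U_1^*\oplus U_2^*\to V^*$ is an epimorphism, i.e. the images of $q_1^*$ and $q_2^*$ sum to $V^*$. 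Tensoring on the right by $V$, the images of $U_1^*\boxtimes V$ and $U_2^*\boxtimes V$ in $V^*\boxtimes V$ sum to everything, and applying $c_V$ gives $\im c_V\sub\im c_{U_1}+\im c_{U_2}$.

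I do not anticipate a genuine obstacle here; the whole argument is the commutative square of \cref{matrix_coeff_square} plus the exactness of $\otimes$ and of duality, and all objects involved are finite-length objects of $\CC$. The only points needing a little care are the elementary observation that a morphism carries a finite sum of subobjects to the sum of their images, and, in the quotient case, the verification that dualizing the monomorphism $V\hookrightarrow U_1\oplus U_2$ really does produce an epimorphism $U_1^*\oplus U_2^*\onto V^*$.
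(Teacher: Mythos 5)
Your proof is correct, and both directions are handled soundly: the containments $\im c_{W_i},\im c_{U_i}\sub\im c_V$ come from \cref{inj_surj}, and the reverse inclusions follow from the naturality square of \cref{matrix_coeff_square} applied to each inclusion $W_i\hookrightarrow V$ (resp.\ each quotient $V\onto U_i$), together with the fact that exactness of $\otimes$ makes $V^*\otimes W_1+V^*\otimes W_2=V^*\otimes V$ (resp.\ that dualizing the monomorphism $V\to U_1\oplus U_2$ gives an epimorphism $U_1^*\oplus U_2^*\onto V^*$, so the images of $U_i^*\otimes V$ cover $V^*\otimes V$). The paper takes a slightly different, more packaged route: it applies \cref{inj_surj} to the single epimorphism $W_1\oplus W_2\to V$ (resp.\ the monomorphism $V\to U_1\oplus U_2$), giving $\im c_V\sub\im c_{W_1\oplus W_2}$, and then invokes \cref{ker_of_c} to kill the cross terms $W_2^*\boxtimes W_1$ and $W_1^*\boxtimes W_2$ inside $(W_1\oplus W_2)^*\boxtimes(W_1\oplus W_2)$, so that $c_{W_1\oplus W_2}$ factors through $c_{W_1}\oplus c_{W_2}$ and hence $\im c_{W_1\oplus W_2}=\im c_{W_1}+\im c_{W_2}$. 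Both arguments are immediate consequences of \cref{matrix_coeff_square}; yours never forms the matrix-coefficient morphism of the direct sum and does not use \cref{ker_of_c}, at the cost of the two routine verifications you flag (that an exact tensor functor and a morphism carry sums of subobjects to sums of images, and that duality turns the given monomorphism into an epimorphism), both of which are unproblematic since all objects here lie in $\CC$.
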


\begin{proof}
	We apply \cref{inj_surj} to the epimorphism $W_1\oplus W_2\to V$ and monomorphism $V\to U_1\oplus U_2$, and use Corollary \ref{ker_of_c} to find that $c_{W_1\oplus W_2}$ (resp. $c_{U_1\oplus U_2}$) factors through $c_{W_1}\oplus c_{W_2}$ (resp. $c_{U_1}\oplus c_{U_2}$).  
\end{proof}

\subsection{} Given a finite-length right $C$-subcomodule $V$ of $C$, let $\epsilon_V:V\to \mathbf{1}$ be the restriction of $\epsilon$ to $V$ and $\epsilon_V^*:\mathbf{1}\to V^*$ its dual.  Then the following is a commutative diagram of right $C$-comodules:
\[
\xymatrix{V\ar[rr]^{\epsilon_V^*\otimes 1}\ar[drr] & & V^*\otimes V\ar[d]_{c_V}\\ && C}
\]

Thus $V$ is a right subcomodule of the image of $c_V$ in $C$.  Since $C$ is the sum of its finite length right sub-comodules, it follows that $C=\sum\im c_V$, where the sum runs over all right $C$-comodules in $\CC$.  

\section{Socle Filtration and Injectives}

\subsection{} The objects of $\mathbf{Mod}_C$, $_C\mathbf{Mod}$, and $_C\mathbf{Mod}_C$ admit socle filtrations.  Using the same notation as Green in \cite{Gre}, we write $\sigma_i(V)$ for the $i$th term in the socle filtration of an object $V$.  In this case we have that $V$ is the direct limit of its socle filtration. If the socle filtration of an object $V$ is finite (which happens in particular if $V$ is of finite-length, i.e.~ is in $\CC$), then we write $\ell\ell(V)$ for the length of the socle filtration, the Loewy length of $V$.  In this case, $\ell\ell(V)$ is the length of every minimal semisimple filtration of $V$.  Further, then $V$ also has a radical filtration which is a descending filtration whose $i$th term we write as $\rho^i(V)$, and whose length is also $\ell\ell(V)$.  Recall that $\rho^1(V):=\rho(V)$ is defined to be the minimal subcomodule of $V$ such that $V/\rho(V)$ is semisimple, and we define the filtration inductively by $\rho^i(V)=\rho(\rho^{i-1}(V))$.

\begin{lemma}
	If $V$ is of finite length, then $\sigma_i(V)^\perp=\rho^i(V^*)$ and $\rho^i(V)^\perp=\sigma_i(V^*)$.  
\end{lemma}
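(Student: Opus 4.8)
The plan is to use the exact contravariant antiequivalence $(-)^*\colon\mathsf{mod}_C\to{}_C\mathsf{mod}$ of \ref{dual_construction}. Being an antiequivalence of abelian categories, it carries simple comodules to simple comodules, hence semisimple comodules to semisimple comodules; being exact, it carries a short exact sequence $0\to W\to V\to V/W\to 0$ to $0\to W^\perp\to V^*\to W^*\to 0$, identifying $(V/W)^*\cong W^\perp$ and $W^*\cong V^*/W^\perp$. The only genuinely formal input beyond this is the compatibility of the $\perp$-operation with subquotients: if $W\sub W'\sub V$ then $(W'/W)^\perp$, computed inside $(V/W)^*$, agrees with $(W')^\perp$, computed inside $V^*$, under the identification $(V/W)^*=W^\perp\sub V^*$; equivalently, $\perp$ is transitive. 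I would record this compatibility first.

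Next I would treat the case $i=1$, which drives the induction. Since $(-)^*$ reverses inclusions, it exchanges the maximal semisimple \emph{subobject} with the minimal \emph{quotient} that is semisimple. Concretely, $V^*/\sigma_1(V)^\perp\cong\sigma_1(V)^*$ is semisimple, and if $N\sub V^*$ is any subcomodule with $V^*/N$ semisimple, then the inverse antiequivalence ${}^*(-)$ realizes ${}^*(V^*/N)$ as a semisimple subcomodule of ${}^*(V^*)\cong V$, hence ${}^*(V^*/N)\sub\sigma_1(V)$; dualizing back gives $N\supseteq\sigma_1(V)^\perp$. Thus $\sigma_1(V)^\perp=\rho^1(V^*)$. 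Dually, $\rho^1(V)^\perp\cong(V/\rho^1(V))^*$ is semisimple and, by the symmetric argument, is the maximal semisimple subcomodule of $V^*$, so $\rho^1(V)^\perp=\sigma_1(V^*)$.

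Finally I would induct on $i$, with base case $i=0$: $\sigma_0(V)^\perp=0^\perp=V^*=\rho^0(V^*)$ and $\rho^0(V)^\perp=V^\perp=0=\sigma_0(V^*)$. For the inductive step on the first identity: by hypothesis $(V/\sigma_{i-1}(V))^*=\sigma_{i-1}(V)^\perp=\rho^{i-1}(V^*)$, and the socle of $V/\sigma_{i-1}(V)$ is $\sigma_i(V)/\sigma_{i-1}(V)$; applying the $i=1$ case to the comodule $V/\sigma_{i-1}(V)$ shows that the perp of this socle inside $(V/\sigma_{i-1}(V))^*$ equals $\rho^1\bigl((V/\sigma_{i-1}(V))^*\bigr)=\rho^1(\rho^{i-1}(V^*))=\rho^i(V^*)$, while transitivity of $\perp$ identifies that same subobject with $\sigma_i(V)^\perp\sub V^*$. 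Hence $\sigma_i(V)^\perp=\rho^i(V^*)$. The second identity follows from the mirror-image induction, using $(\rho^{i-1}(V))^*=V^*/\rho^{i-1}(V)^\perp=V^*/\sigma_{i-1}(V^*)$, the relation $\rho^i(V)=\rho^1(\rho^{i-1}(V))$, and again the $i=1$ case together with transitivity of $\perp$. The main obstacle I anticipate is purely bookkeeping: keeping straight which filtration on $V$ corresponds to which on $V^*$ under the order reversal, and pinning down the transitivity-of-$\perp$ compatibility (including the biduality identification ${}^*(V^*)\cong V$) cleanly enough that the two inductions run in parallel.
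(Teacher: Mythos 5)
Your proposal is correct and follows the same route as the paper, which simply observes that the statement "follows from the fact that dualizing is an antiequivalence of comodule categories"; your write-up just fills in the details (exchange of socle and radical at the first step, transitivity of $\perp$, and the induction) of exactly that one-line argument.
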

\begin{proof}
	Follows from the fact that dualizing is an antiequivalence of comodule categories.
\end{proof}
\subsection{} The socle filtration on $C$ as a $C$-bicomodule is often called the coradical filtration of $C$, and is sometimes written $C_i:=\sigma_i(C)$.  The goal of this paper is to give a description of the layers of the coradical filtration of $C$.

\subsection{} Define the functor $F_C:\hat{\CC}\to{\mathbf{Mod}_C}$ by $F_C(S)=S\otimes C$ (see \ref{tensor_product_structure_trivial}).

\begin{lemma}\label{adjoint_functor}
	The functor $F_C$ is right adjoint to the forgetful functor $\mathbf{Mod}_C\to\hat{\CC}$.
\end{lemma}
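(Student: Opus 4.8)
The plan is to establish the adjunction by producing, for every right $C$-comodule $V$ and every object $S\in\hat{\CC}$, a bijection
\[
\Hom_{\mathbf{Mod}_C}(V,S\otimes C)\;\cong\;\Hom_{\hat{\CC}}(V,S)
\]
natural in both variables, where $S\otimes C$ carries the right comodule structure $\id_S\otimes\Delta$ described in \ref{tensor_product_structure_trivial}. (This is defined for arbitrary $S\in\hat{\CC}$ because $\otimes$ was extended to a biexact bifunctor $\hat{\CC}\times\hat{\CC}\to\hat{\CC}$ in Section~2.) In one direction, I send a comodule morphism $\phi\colon V\to S\otimes C$ to $(\id_S\otimes\epsilon)\circ\phi\colon V\to S$, using the identification $S\otimes\mathbf 1\cong S$. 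In the other direction, I send $\psi\colon V\to S$ in $\hat{\CC}$ to $\widetilde\psi:=(\psi\otimes\id_C)\circ a_V\colon V\to V\otimes C\to S\otimes C$.

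First I would check $\widetilde\psi$ is a morphism of right $C$-comodules. Using coassociativity of the coaction, $(\id_V\otimes\Delta)\circ a_V=(a_V\otimes\id_C)\circ a_V$, one computes
\[
(\id_S\otimes\Delta)\circ\widetilde\psi=(\psi\otimes\Delta)\circ a_V=(\psi\otimes\id_C\otimes\id_C)\circ(a_V\otimes\id_C)\circ a_V=(\widetilde\psi\otimes\id_C)\circ a_V,
\]
which is exactly the comodule-morphism condition. Next I would verify the two assignments are mutually inverse. Starting from $\psi$, the round trip gives $(\id_S\otimes\epsilon)\circ(\psi\otimes\id_C)\circ a_V=\psi\circ(\id_V\otimes\epsilon)\circ a_V=\psi$ by the counit axiom for the comodule $V$. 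Conversely, starting from a comodule morphism $\phi$ and setting $\psi=(\id_S\otimes\epsilon)\circ\phi$, the comodule-morphism identity $(\phi\otimes\id_C)\circ a_V=(\id_S\otimes\Delta)\circ\phi$, after applying $\id_S\otimes\epsilon\otimes\id_C$, yields $(\psi\otimes\id_C)\circ a_V=(\id_S\otimes((\epsilon\otimes\id_C)\circ\Delta))\circ\phi=\phi$ by the counit axiom for $C$. Naturality in $V$ and in $S$ is immediate, since both maps are built functorially out of $a_V$, $\epsilon$, and $\Delta$.

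I do not expect a genuine obstacle: the argument is the standard cofree-comodule construction. The only points needing care are that $\otimes$ is defined and well behaved on all of $\hat{\CC}\times\hat{\CC}$ (so $S\otimes C$ and the above composites make sense for arbitrary $S$), and the bookkeeping of the associativity and unit constraints of the non-strict monoidal category $\hat{\CC}$, which I would suppress by appealing to Mac Lane coherence.
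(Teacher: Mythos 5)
Your proof is correct and is exactly the standard cofree-comodule adjunction argument (unit $(\psi\otimes\id_C)\circ a_V$, counit $(\id_S\otimes\epsilon)\circ\phi$, checked mutually inverse via coassociativity and the counit axioms), which is the same argument the paper invokes by citing (1.5a) of Green; the only difference is that you write it out explicitly rather than deferring to the reference.
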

\begin{proof}
The proof follows the same ideas as in (1.5a) of \cite{Gre}.
\end{proof}

\subsection{} \begin{lemma}
	The categories $_C\mathbf{Mod}$ and $\mathbf{Mod}_C$ have enough injectives.
\end{lemma}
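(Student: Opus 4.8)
The plan is to deduce this from the adjunction in \cref{adjoint_functor} together with the semisimplicity of $\hat{\CC}$. First I would note that since $\CC$ is semisimple, its cocompletion $\hat{\CC}\cong\mathbf{Vec}(G,\omega)$ is semisimple as an abelian category: a $G$-graded vector space is the direct sum of its homogeneous components, and any graded subspace admits a graded complement, so every short exact sequence in $\hat{\CC}$ splits. In particular every object of $\hat{\CC}$ is injective, so $\hat{\CC}$ trivially has enough injectives. Next, the forgetful functor $U:\mathbf{Mod}_C\to\hat{\CC}$ is exact, since kernels and cokernels of comodule morphisms are computed in $\hat{\CC}$, and by \cref{adjoint_functor} its right adjoint is $F_C$.

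Now I would invoke the standard fact that a right adjoint of an exact functor preserves injectives: if $J$ is injective in $\hat{\CC}$ then $\Hom_{\mathbf{Mod}_C}(-,F_C(J))\cong\Hom_{\hat{\CC}}(U(-),J)$ is exact, being a composite of the exact functor $U$ with the exact functor $\Hom_{\hat{\CC}}(-,J)$. Since every object of $\hat{\CC}$ is injective, this shows that $F_C(J)=J\otimes C$ is an injective right $C$-comodule for every $J\in\hat{\CC}$. To finish, given a right $C$-comodule $V$ I would embed it into such an injective using the unit of the adjunction at $V$, which (unwinding the proof of \cref{adjoint_functor}) is exactly the structure morphism $a_V:V\to V\otimes C=F_C(U(V))$; this is a morphism of right $C$-comodules, and it is a split monomorphism in $\hat{\CC}$ because $(\id_V\otimes\epsilon)\circ a_V=\id_V$, hence a monomorphism in $\mathbf{Mod}_C$. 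Thus $a_V$ exhibits $V$ as a subcomodule of the injective comodule $V\otimes C$. The case of $_C\mathbf{Mod}$ is handled identically, using the left-handed analogue of \cref{adjoint_functor} in which $S\mapsto C\otimes S$ is right adjoint to the forgetful functor ${_C\mathbf{Mod}}\to\hat{\CC}$.

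I do not expect a genuine obstacle here: every step is routine. The two points that merit a moment's care are the assertion that $\hat{\CC}$ has enough injectives — immediate once one observes $\hat{\CC}$ is semisimple — and the identification of the unit of the $(U,F_C)$ adjunction with the comodule structure map $a_V$, which is what makes the embedding $V\hookrightarrow V\otimes C$ a map of comodules. Both of these are settled by inspection, so the argument above is essentially complete as stated.
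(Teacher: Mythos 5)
Your proof is correct and follows essentially the same route as the paper: injectivity of $F_C(V)=V\otimes C$ via the adjunction of \cref{adjoint_functor}, together with the observation that $a_V:V\to V\otimes C$ is a monomorphism of comodules (split in $\hat{\CC}$ by the counit axiom). The extra details you supply — semisimplicity of $\hat{\CC}$ and the identification of $a_V$ with the unit of the adjunction — are exactly what the paper's one-line proof leaves implicit.
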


\begin{proof}
Given a right $C$-comodule $V$, $F_C(V)$ is injective by \cref{adjoint_functor} and the morphism $a_V:V\to F_C(V)$ is a monomorphism of right $C$-comodules. 
\end{proof}

\begin{lemma}
	The direct sum of injective comodules is injective.
\end{lemma}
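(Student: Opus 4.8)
The plan is to reduce the statement to the ``cofree'' comodules $F_C(S)=S\otimes C$ for $S\in\hat{\CC}$, using three facts: that $F_C(S)$ is always injective, that $F_C$ commutes with arbitrary direct sums, and that a direct summand of an injective object is injective.

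First I would record that $F_C(S)$ is an injective right $C$-comodule for \emph{every} object $S$ of $\hat{\CC}$. This is the argument already used in the proof that $\mathbf{Mod}_C$ has enough injectives: the forgetful functor $U\colon\mathbf{Mod}_C\to\hat{\CC}$ is exact, and by \cref{adjoint_functor} it has $F_C$ as a right adjoint, so $F_C$ carries injective objects of $\hat{\CC}$ to injective comodules; and since $\hat{\CC}\cong\mathbf{Vec}(G,\omega)$ is semisimple, every object of $\hat{\CC}$ is injective. Next I would observe that every injective right $C$-comodule $I$ is a direct summand of a cofree comodule: the structure morphism $a_I\colon I\to F_C(U(I))$ is a monomorphism of comodules (exactly as in the enough-injectives proof), and injectivity of $I$ forces it to split.

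The one point that needs an actual verification is that $F_C$ commutes with direct sums, i.e. that for a family $(S_\alpha)$ in $\hat{\CC}$ the canonical comparison morphism
\[
\bigoplus_\alpha F_C(S_\alpha)\longrightarrow F_C\Bigl(\bigoplus_\alpha S_\alpha\Bigr)
\]
is an isomorphism of comodules. Applying the exact, faithful functor $U$ (which, being a left adjoint, preserves direct sums), this reduces to the assertion that in $\hat{\CC}$ tensoring with the fixed object $C$ commutes with arbitrary direct sums; in the model $\mathbf{Vec}(G,\omega)$ this is immediate. Hence any direct sum of cofree comodules is again of the form $F_C(S)$, so is injective.

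Putting these together: given a family $(I_\alpha)$ of injective right $C$-comodules, the direct sum $I=\bigoplus_\alpha I_\alpha$ exists since $\mathbf{Mod}_C$ is cocomplete, and by the second step each $I_\alpha$ is a direct summand of $F_C(U(I_\alpha))$, so $I$ is a direct summand of $\bigoplus_\alpha F_C(U(I_\alpha))$, which is injective by the previous paragraph. A direct summand of an injective object is injective, so $I$ is injective. The same argument applies word for word to $_C\mathbf{Mod}$. I expect the cocontinuity of $\otimes$ on $\hat{\CC}$ to be the only step requiring more than formal manipulation of adjoints and retractions.
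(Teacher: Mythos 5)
Your proof is correct and complete. The paper itself gives no argument here, deferring entirely to (1.5b) of Green's paper; what you have written is the standard proof that reference encapsulates, namely that every injective comodule is a retract of a cofree comodule $F_C(S)$ (via the splitting of the monomorphism $a_I$), that $F_C$ preserves arbitrary direct sums because $-\otimes C$ is cocontinuous on $\hat{\CC}$, and that cofree comodules are injective by \cref{adjoint_functor} together with the fact that every object of $\hat{\CC}\cong\mathbf{Vec}(G,\omega)$ is injective. You correctly isolate the only two points that are not purely formal --- injectivity of all objects of $\hat{\CC}$ and cocontinuity of $\otimes$ --- and both are immediate in the graded-vector-space model, so the argument stands on its own; an alternative (and equally valid) route, closer in spirit to locally finite module theory, would use that every comodule is the union of its finite-length subcomodules, so that any map from a finite-length subobject into $\bigoplus_\alpha I_\alpha$ factors through a finite subsum, but your retract-of-cofree argument is shorter given what the paper has already established.
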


\begin{proof}
The proof in (1.5b) of \cite{Gre} carries through to our case.
\end{proof}

Given a right $C$-comodule $V$, an injective envelope of $V$ is the data of an injective right comodule $I$ with a monomorphism $V\to I$ which induces an isomorphism $\sigma(V)\xto{\sim}\sigma(I)$.  An injective envelope is unique up to isomorphism if it exists.  Using Brauer's idempotent lifting process as described in \cite{Gre}, we can prove that injective envelopes always exists.  Choose for each simple right comodule $L$ an injective envelope $I(L)$.  We now have: 

\begin{cor}
	The indecomposable injective right comodules are exactly those of the form $I(L)$ for a simple right comodule $L$.  Thus the injective right comodules are exactly the direct sums of injective indecomposables $I(L)$.  
\end{cor}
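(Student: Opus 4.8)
The plan is to prove the corollary in three moves: show each $I(L)$ is indecomposable, show every injective right comodule is a direct sum of modules of the form $I(L)$, and then read off both assertions. Two elementary facts will be used repeatedly. First, every nonzero right $C$-comodule $V$ has nonzero socle: $V$ is a sum of finite-length subcomodules, any nonzero one of which contains a simple subobject, yielding a simple subcomodule of $V$. Second, the socle commutes with direct sums, $\sigma(\bigoplus_j M_j)\cong\bigoplus_j\sigma(M_j)$: a simple (hence finite-length) subobject of $\bigoplus_j M_j$ lies in a finite subsum, and for finite direct sums additivity of the socle is standard, obtained by projecting a semisimple subobject onto each factor.

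For indecomposability of $I(L)$: by the defining property of the injective envelope, $\sigma(I(L))\cong\sigma(L)=L$ is simple. If $I(L)\cong A\oplus B$ with $A,B$ nonzero, then $L\cong\sigma(A)\oplus\sigma(B)$ with both summands nonzero, contradicting simplicity; hence $I(L)$ is indecomposable.

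For the decomposition of an arbitrary injective right comodule $I$: its socle is semisimple, so write $\sigma(I)\cong\bigoplus_{j\in J}L_j$ with each $L_j$ simple, and set $\widetilde I:=\bigoplus_{j\in J}I(L_j)$. Then $\widetilde I$ is injective by the lemma that direct sums of injectives are injective, and $\sigma(\widetilde I)\cong\bigoplus_j\sigma(I(L_j))\cong\bigoplus_j L_j\cong\sigma(I)$. Using injectivity of $I$, the composite $\sigma(\widetilde I)\xrightarrow{\sim}\sigma(I)\hookrightarrow I$ extends along the monomorphism $\sigma(\widetilde I)\hookrightarrow\widetilde I$ to a morphism $\varphi\colon\widetilde I\to I$ which restricts to an isomorphism on socles. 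Now $\ker\varphi$ has socle contained in $\sigma(\widetilde I)$, on which $\varphi$ is injective, so $\sigma(\ker\varphi)=0$ and hence $\ker\varphi=0$. Thus $\im\varphi\cong\widetilde I$ is an injective subcomodule of $I$, hence a direct summand: $I\cong\im\varphi\oplus I'$. Passing to socles gives $\sigma(I)\cong\sigma(I)\oplus\sigma(I')$, so $\sigma(I')=0$ and therefore $I'=0$, i.e. $I\cong\widetilde I=\bigoplus_{j\in J}I(L_j)$.

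The corollary then follows formally. If $I$ is indecomposable injective, the decomposition $I\cong\bigoplus_{j\in J}I(L_j)$ with all summands nonzero forces $|J|=1$, so $I\cong I(L)$; conversely each $I(L)$ is indecomposable injective by the first step. An arbitrary injective comodule is a direct sum of such $I(L)$ by the displayed decomposition, and conversely any direct sum of $I(L)$'s is injective by the direct-sum lemma. I expect the only point needing genuine care to be the bookkeeping of socles under possibly infinite direct sums and essential extensions; there is no serious obstacle beyond this, since once that bookkeeping and the splitting of injective subobjects are in place the result is purely formal.
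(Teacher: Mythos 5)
Your proof is correct and is essentially the standard argument that the paper leaves implicit (the corollary is stated there without proof, as a consequence of the existence of injective envelopes and the lemma that direct sums of injectives are injective, following \cite{green1976locally}): indecomposability of $I(L)$ from its simple socle, and the decomposition of an arbitrary injective $I$ via a socle-preserving map $\varphi\colon\bigoplus_j I(L_j)\to I$ shown to be an isomorphism. One small phrasing point: at the step $I\cong\im\varphi\oplus I'$, conclude $\sigma(I')=0$ from the internal containment $\sigma(I)=\varphi(\sigma(\widetilde{I}))\subseteq\im\varphi$, hence $\sigma(I')\subseteq\im\varphi\cap I'=0$, rather than from the abstract isomorphism $\sigma(I)\cong\sigma(I)\oplus\sigma(I')$, which by itself does not force $\sigma(I')=0$ when the socle has infinite multiplicities.
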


\section{Structure of $C$ as a right comodule}
We now make some assumptions on $C$ and its right comodule category.

\begin{enumerate}
	\item[\textbf{(C1)}] We suppose that if $L$ is a simple right $C$-comodule and $S$ is a simple object of $\CC$, then $L$ and $S\otimes L$ are not isomorphic as comodules unless $S\cong\mathbf{1}$.  
	
	\item[\textbf{(C2)}] We assume that if $L,L'$ are right $C$-comodules then $L^*\boxtimes L'$ is a simple bicomodule, and further every simple bicomodule is of this form.
\end{enumerate}
\begin{remark}\label{C3}
	Assumption (C2) implies that every semisimple bicomodule is semisimple as a right comodule.  In particular, if $V$ is a bicomodule of finite length then its Loewy length as a right comodule is less than or equal to its Loewy length as a bicomodule.
\end{remark}

Assumption (C1) is saying that the action of the Picard group of $\CC$ (the group of invertible objects of $\CC$ modulo isomorphism, under tensor product) on the set of simple comodules is free.  Thus we may, and do, choose representatives of each orbit, $\{L_{\alpha}\}_{\alpha}$.  In other words the simple right comodules $L_{\alpha}$ have the property that if $L_{\alpha}\cong S\otimes L_{\beta}$ for a simple object $S$ of $\CC$, then $\alpha=\beta$ and $S\cong \mathbf{1}$.  Further if $L$ is a simple right comodule then there exists an $\alpha$ and a simple object $S$ of $\CC$ such that $L\cong S\otimes L_{\alpha}$.

\begin{lemma}\label{uniqueness_bicomodules}
	Every simple bicomodule may be written as $L_{\alpha}^*\boxtimes L$ for a unique $\alpha$ and a unique simple right comodule $L$, up to isomorphism.   
\end{lemma}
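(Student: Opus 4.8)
The plan is to establish existence and uniqueness separately; uniqueness is the substantive point, and it in fact uses only (C1).

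\emph{Existence.} Let $B$ be a simple bicomodule. By (C2) we may write $B\cong M^*\boxtimes N$ for simple right comodules $M,N$. By (C1) and the chosen orbit representatives there are a unique $\alpha$ and a simple object $S$ of $\CC$ with $M\cong S\otimes L_\alpha$, hence $M^*\cong L_\alpha^*\otimes S^*$ as left comodules (in the sense of \ref{tensor_product_structure_trivial}). Applying \cref{associativity_bicom_iso} to the left comodule $L_\alpha^*$, the object $S^*$, and the right comodule $N$ gives
\[
B\cong(L_\alpha^*\otimes S^*)\boxtimes N\cong L_\alpha^*\boxtimes(S^*\otimes N),
\]
and $L:=S^*\otimes N$ is a simple right comodule by \ref{tensor_product_structure_trivial}.

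\emph{Uniqueness.} Suppose $L_\alpha^*\boxtimes L\cong L_\beta^*\boxtimes L'$ as bicomodules; I want $\alpha=\beta$ and $L\cong L'$. The idea is to compute $\Hom_{{}_C\mathbf{Mod}_C}(L_\alpha^*\boxtimes L,\,L_\beta^*\boxtimes L')$ and show it vanishes unless $\alpha=\beta$ and $L\cong L'$. A bicomodule morphism is in particular a morphism of left comodules, and as left comodules the two bicomodules are $L_\alpha^*\otimes\ul L$ and $L_\beta^*\otimes\ul{L'}$ in the sense of \ref{tensor_product_structure_trivial}, where $\ul L,\ul{L'}$ denote underlying objects. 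Using rigidity of $\ul L$ in $\CC$ one has $\Hom_{{}_C\mathbf{Mod}}(L_\alpha^*\otimes\ul L,\,L_\beta^*\otimes\ul{L'})\cong\Hom_{{}_C\mathbf{Mod}}(L_\alpha^*,\,L_\beta^*\otimes(\ul{L'}\otimes\ul L^*))$; decomposing $\ul{L'}\otimes\ul L^*$ into simple (hence invertible) objects $S$, a summand $\Hom_{{}_C\mathbf{Mod}}(L_\alpha^*,\,L_\beta^*\otimes S)$ is nonzero only when $L_\alpha^*\cong L_\beta^*\otimes S$. Since $(-)^*$ is an antiequivalence taking the chosen orbit representatives for right comodules to orbit representatives for left comodules and translating (C1), this forces $\alpha=\beta$ and $S\cong\mathbf1$. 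Hence the Hom space vanishes for $\alpha\ne\beta$; and for $\alpha=\beta$ a dimension count (this is where (C1) is used) shows the natural map
\[
\End_{{}_C\mathbf{Mod}}(L_\alpha^*)\otimes_k\Hom_\CC(\ul L,\ul{L'})\longrightarrow\Hom_{{}_C\mathbf{Mod}}(L_\alpha^*\otimes\ul L,\,L_\alpha^*\otimes\ul{L'})
\]
is an isomorphism, so, fixing a $k$-basis $\{d_i\}$ of $\End_{{}_C\mathbf{Mod}}(L_\alpha^*)$, every left-comodule morphism is uniquely $\sum_i d_i\otimes f_i$ with $f_i\in\Hom_\CC(\ul L,\ul{L'})$.

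It remains to impose right $C$-linearity. The right comodule structures of $L_\alpha^*\boxtimes L$ and $L_\alpha^*\boxtimes L'$ are $\id_{\ul{L_\alpha^*}}\otimes a_L$ and $\id_{\ul{L_\alpha^*}}\otimes a_{L'}$, so a direct check shows $\sum_i d_i\otimes f_i$ respects right coactions exactly when $(f_i\otimes\id_C)\circ a_L=a_{L'}\circ f_i$ for every $i$, i.e. exactly when each $f_i$ is a morphism of right comodules $L\to L'$. Hence $\Hom_{{}_C\mathbf{Mod}_C}(L_\alpha^*\boxtimes L,\,L_\alpha^*\boxtimes L')\cong\End_{{}_C\mathbf{Mod}}(L_\alpha^*)\otimes_k\Hom_{\mathbf{Mod}_C}(L,L')$, which vanishes unless $L\cong L'$ by Schur's lemma; together with the $\alpha\ne\beta$ case this yields uniqueness. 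The step I expect to be the main obstacle is exactly the one flagged above: under (C1), $L_\alpha^*$ behaves as an ``absolutely simple'' object relative to $\CC$, in the sense that $\Hom_{{}_C\mathbf{Mod}}(L_\alpha^*,L_\alpha^*\otimes S)=0$ for all simple $S\not\cong\mathbf1$, and it is this that makes the decomposition of left-comodule morphisms into pure tensors $d_i\otimes f_i$ (and their vanishing for $\alpha\ne\beta$) valid; the rest -- the rigidity manipulations, Schur's lemma, and the linearity check -- is routine, with no finiteness issues since simple comodules lie in $\CC$.
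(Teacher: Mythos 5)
Your proof is correct, and while the existence half coincides with the paper's (apply (C2), absorb the twist through \cref{associativity_bicom_iso}), your uniqueness argument is genuinely different. The paper reduces uniqueness to the implication ``$(L'\otimes S)\boxtimes L\cong L'\boxtimes L\Rightarrow S\cong\mathbf{1}$'' and proves it by hand: it decomposes $L'$ and $L$ into isotypic pieces indexed by the Picard group $G=\operatorname{Pic}(\CC)$, uses (C1) to see that the isomorphism must match up graded components on each side, and derives equations like $gh=g$ in $G$ forcing $h=e$. You instead compute the whole space $\Hom_{{}_C\mathbf{Mod}_C}(L_\alpha^*\boxtimes L,\,L_\beta^*\boxtimes L')$: first as left-comodule maps via the adjunction $-\otimes X\dashv -\otimes X^*$ and the decomposition of $\ul{L'}\otimes\ul{L}^*$ into invertible simples, where (C1) (transported through the duality antiequivalence of \ref{dual_construction}) kills every summand unless $\alpha=\beta$, and then imposing right colinearity to identify the bicomodule Homs with $\End_{{}_C\mathbf{Mod}}(L_\alpha^*)\otimes_k\Hom_{\mathbf{Mod}_C}(L,L')$, which Schur's lemma handles. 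Your route buys more: an actual Hom-space formula (hence pairwise non-isomorphy and endomorphism rings of the simple bicomodules, not just uniqueness of the labels), no choices of isotypic decompositions, and an argument that works uniformly over non-algebraically-closed $k$; the paper's route is more elementary, using only the $G$-graded structure of objects and no adjunction bookkeeping. Two small points you should make explicit to be airtight, though both are routine and at the level of rigor the paper itself allows: the ``dimension count'' needs to be paired with injectivity of the canonical map $\Hom(A,B)\otimes_k\Hom(X,Y)\to\Hom(A\otimes X,B\otimes Y)$ (clear in $\mathsf{vec}(G,\omega)$, and also needed, for possibly infinite-dimensional targets, in your right-colinearity check), and the compatibility of duality with the twist construction, i.e. $(S\otimes L_\alpha)^*\cong L_\alpha^*\otimes S^*$ as left comodules in the sense of \ref{tensor_product_structure_trivial}.
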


\begin{proof}
	By (C2) the simple bicomodules are all of the form $(L')^*\boxtimes L''$ for some simple right comodules $L',L''$.  Choose $\alpha$ such that $L'\cong S\otimes L_{\alpha}$.  Then by \cref{associativity_bicom_iso} $(L')^*\boxtimes L''\cong (L_{\alpha}^*\otimes S^*)\boxtimes L''\cong L_{\alpha}^*\boxtimes(S^*\otimes L'')$.  Setting $L=S^*\otimes L''$ we have proven the first half of our claim.
	
	The proof of uniqueness of $L$ is a little trickier.  We prove the following statement which implies it: if $L$ is a simple right comodule, $L'$ a simple left comodule, and $S$ is a simple object of $\CC$, then if $(L'\otimes S)\boxtimes L\cong L'\boxtimes L$ then $S\cong\mathbf{1}$.  Write $G=\operatorname{Pic}(\CC)$ for the Picard group of $\CC$, that is the group of simple (hence invertible) objects of $\CC$ up to isomorphism under tensor product.  For each $g\in G$, choose a representative simple object $S_g$,  and let $h\in G$ be the class of $S$ so that $S\cong S_h$.  Finally, write $\phi:(L'\otimes S)\boxtimes L\to L'\boxtimes L$ for a given isomorphism of bicomodules.
	
	Now write as objects of $\CC$ isotypic decompositions $L'=\bigoplus\limits_g T_g$, where $T_g\cong S_g^{\oplus n_g}$ and $L=\bigoplus\limits_g U_g$ where $U_g\cong S_g^{\oplus m_g}$. The isomorphism $\phi$ of bicomodules gives rise to an isomorphism of right comodules
	\[
	\bigoplus\limits_{g}(T_g\otimes S)\otimes L\cong\bigoplus\limits_{g}T_{g}\otimes L
	\]
	and an isomorphism of left comodules
	\[
 	\bigoplus\limits_{g} L'\otimes (S\otimes U_{g})\cong \bigoplus\limits_g L'\otimes U_{g}.
	\]
	By (C1), this must induce isomorphisms of right comodules
	\begin{equation}\label{eqn1}
	(T_g\otimes S)\otimes L\cong T_{gh}\otimes L,
	\end{equation}
	i.e.~ $\phi$ must take $(T_g\otimes S)\otimes L$ into $T_{gh}\otimes L$ for all $g\in G$, and similarly of left comodules
	\begin{equation}\label{eqn2}
	L'\otimes (S\otimes U_g)\cong L'\otimes U_{hg},
	\end{equation}
	i.e.~ $\phi$ must take $L'\otimes (S\otimes U_g)$ into $L'\otimes U_{hg}$ for all $g\in G$.	However for $g,h,k\in G$, (\ref{eqn1}) implies that $\phi$ induces an isomorphism
	\[
	T_g\otimes S\otimes U_k\cong T_{gh}\otimes U_k
	\]
	while (\ref{eqn2}) implies that $\phi$ induces an isomorphism
	\[
	T_{gh}\otimes S\otimes U_{h^{-1}k}\cong T_{gh}\otimes U_k.
	\]
	Thus we learn that $\phi$ takes both $T_g\otimes S\otimes U_k$ and $T_{gh}\otimes S\otimes U_{h^{-1}k}$ isomorphically to $T_{gh}\otimes U_k$; but since these are objects of $\CC$, and hence of finite length, this forces an equality of subobjects of $L'\boxtimes L$, namely that $T_g\otimes S\otimes U_k=T_{gh}\otimes S\otimes U_{h^{-1}k}$.  But these subobjects are distinct unless $gh=g$ and $h^{-1}k=k$ i.e.~ $h$ must be the identity, and so $S\cong \mathbf{1}$ as desired.
	
\end{proof}

\begin{lemma}\label{im_c}
	If $S$ is a simple object of $\CC$ and $V$ is in $\mathsf{mod}_C$, then $\im c_{V}=\im c_{S\otimes V}$.  In particular if $L$ is a simple right comodule and $L\cong S\otimes L_{\alpha}$, then $\im c_{L}=\im c_{L_\alpha}$.
\end{lemma}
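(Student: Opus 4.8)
The plan is to identify the domains of $c_V$ and $c_{S\otimes V}$ via a canonical bicomodule isomorphism and then check that the two matrix coefficient morphisms agree under it; since the identification is an isomorphism, their images in $C$ must coincide. First I would record the standard fact that, as a left $C$-comodule, $(S\otimes V)^{*}$ is canonically $V^{*}\otimes S^{*}$ equipped with the structure of \ref{tensor_product_structure_trivial} (the left comodule $V^{*}$ with the object $S^{*}\in\CC$ appended on the right): this is a direct unwinding of the formula for $a_{V^{*}}$ in \ref{dual_construction}, using $a_{S\otimes V}=\id_{S}\otimes a_{V}$ together with the usual identification $(S\otimes V)^{*}\cong V^{*}\otimes S^{*}$ in $\CC$ and its evaluation $\operatorname{ev}_{S\otimes V}=\operatorname{ev}_{V}\circ(\id_{V^{*}}\otimes\operatorname{ev}_{S}\otimes\id_{V})$. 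Hence
\[
(S\otimes V)^{*}\boxtimes(S\otimes V)=(V^{*}\otimes S^{*})\boxtimes(S\otimes V),
\]
and \cref{cancellation_bicom_iso}, applied with $W=V^{*}$, supplies a bicomodule isomorphism $\Phi\colon(V^{*}\otimes S^{*})\boxtimes(S\otimes V)\xrightarrow{\sim}V^{*}\boxtimes V$ whose underlying morphism in $\hat\CC$ is, up to the associativity and unit constraints, $\id_{V^{*}}\otimes\operatorname{ev}_{S}\otimes\id_{V}$ (here $\operatorname{ev}_{S}$ is an isomorphism because $S$ is invertible).

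The substance of the proof is then the identity $c_{V}\circ\Phi=c_{S\otimes V}$. Expanding the right-hand side as $(\operatorname{ev}_{S\otimes V}\otimes\id_{C})\circ(\id_{(S\otimes V)^{*}}\otimes a_{S\otimes V})$ and substituting $a_{S\otimes V}=\id_{S}\otimes a_{V}$ together with the factorization of $\operatorname{ev}_{S\otimes V}$ above, and expanding the left-hand side using the description of $\Phi$, both sides become $(\operatorname{ev}_{V}\otimes\id_{C})$ precomposed with two maps that differ only in the order in which one applies $\operatorname{ev}_{S}$ (to the $S^{*}\otimes S$ tensorand) and $a_{V}$ (to the $V$ tensorand). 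Since these act on disjoint tensor factors, the interchange law in the monoidal category $\hat\CC$ makes them commute and the equality follows; equivalently, in the planar graphical calculus for the rigid (not necessarily braided) category $\hat\CC$ one simply slides the $S$-strand created by $\operatorname{coev}_{S}$ and $\operatorname{ev}_{S}$ off the picture. I expect the main obstacle to be purely bookkeeping: faithfully tracking the associativity isomorphism of $\CC$ (governed by the possibly nontrivial $3$-cocycle $\omega$) and the comodule structure maps concealed in the symbol $\boxtimes$. Coherence guarantees everything matches up, but drawing the corresponding large commutative diagram is the tedious step.

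Finally, because $\Phi$ is an isomorphism we get $\im c_{S\otimes V}=\im(c_{V}\circ\Phi)=\im c_{V}$, which is the first assertion. For the ``in particular'' clause, apply this with $V=L_{\alpha}$ to obtain $\im c_{L_{\alpha}}=\im c_{S\otimes L_{\alpha}}$; since $L\cong S\otimes L_{\alpha}$ and \cref{matrix_coeff_square}, applied to an isomorphism of comodules, shows that isomorphic right comodules have equal matrix coefficient images, we conclude $\im c_{L}=\im c_{S\otimes L_{\alpha}}=\im c_{L_{\alpha}}$.
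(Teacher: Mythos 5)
Your proposal is correct and follows essentially the same route as the paper: the paper's proof is precisely to invoke \cref{cancellation_bicom_iso} to identify $(S\otimes V)^*\boxtimes(S\otimes V)$ with $V^*\boxtimes V$ and to note that this isomorphism respects the matrix coefficient morphisms, which is exactly what you verify (in more detail, including the identification $(S\otimes V)^*\cong V^*\otimes S^*$ as left comodules and the check $c_V\circ\Phi=c_{S\otimes V}$).
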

\begin{proof}
	By \cref{cancellation_bicom_iso} we have $V^*\boxtimes V\cong (S\otimes V)^*\boxtimes (S\otimes V)$, and this isomorphism of bicomodules respects the matrix coefficient morphisms.
\end{proof}

\begin{prop}\label{socle_of_C}
	We have $\sigma(C):=\sigma_1(C)=\bigoplus\limits_{\alpha}L_{\alpha}^*\boxtimes L_{\alpha}$ as bicomodules.
\end{prop}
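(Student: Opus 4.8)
The plan is to trap $\sigma_1(C)$ between a lower bound coming from matrix coefficients and an upper bound coming from the structure of $C$ as a right comodule.

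For the lower bound, I would first show that for every simple right comodule $L$ the morphism $c_L\colon L^*\boxtimes L\to C$ is injective: its source is a simple bicomodule by (C2), so $c_L$ is zero or a monomorphism, and it is nonzero since a short computation from the counit axiom gives $\epsilon\circ c_L=\operatorname{ev}_L\neq 0$. Thus each $\im c_L\cong L^*\boxtimes L$ is a simple sub-bicomodule of $C$. By \cref{im_c}, $\im c_L=\im c_{L_\alpha}$ whenever $L\cong S\otimes L_\alpha$, so the distinct sub-bicomodules arising this way are precisely the $\im c_{L_\alpha}$; by \cref{uniqueness_bicomodules} these are pairwise non-isomorphic, since $L_\alpha^*\boxtimes L_\alpha\cong L_\beta^*\boxtimes L_\beta$ forces $\alpha=\beta$. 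A sum of pairwise non-isomorphic simple subobjects in an abelian category is automatically direct, so $\bigoplus_\alpha\im c_{L_\alpha}\cong\bigoplus_\alpha L_\alpha^*\boxtimes L_\alpha$ is a semisimple sub-bicomodule of $C$, and hence $\bigoplus_\alpha L_\alpha^*\boxtimes L_\alpha\sub\sigma_1(C)$.

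For the upper bound, I would invoke \cref{C3}: being semisimple as a bicomodule, $\sigma_1(C)$ is semisimple as a right comodule, so it is contained in the socle of $C$ viewed merely as a right comodule. The point is then that $\bigoplus_\alpha\im c_{L_\alpha}$ already exhausts that right socle. Using that $C\cong F_C(\mathbf 1)$ as a right comodule, \cref{adjoint_functor} gives $\Hom_{\mathbf{Mod}_C}(L,C)\cong\Hom_{\hat{\CC}}(L,\mathbf 1)$ for every simple right comodule $L$; combined with (C1) and duality in $\CC$ this identifies the right-comodule multiplicity of $L$ in the socle of $C$ with that of $L$ in $\bigoplus_\alpha L_\alpha^*\otimes L_\alpha$, the right comodule underlying $\bigoplus_\alpha L_\alpha^*\boxtimes L_\alpha$. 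Since these multiplicities are finite and $\bigoplus_\alpha L_\alpha^*\boxtimes L_\alpha\sub\sigma_1(C)\sub\operatorname{soc}(C)$ as right comodules, all three coincide as right comodules, whence $\sigma_1(C)=\bigoplus_\alpha L_\alpha^*\boxtimes L_\alpha$ as bicomodules. (If the preceding section already records the right-comodule decomposition of $C$ explicitly, this last step is immediate.)

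The hard part is the upper bound: a priori $\sigma_1(C)$ could contain a simple bicomodule $L_\beta^*\boxtimes L$ with $L\not\cong L_\beta$, or contain some $L_\alpha^*\boxtimes L_\alpha$ with multiplicity greater than one, and excluding both genuinely uses the right-comodule structure of $C$ via \cref{adjoint_functor} together with the freeness of the action of $\operatorname{Pic}(\CC)$ from (C1) — in particular the bookkeeping matching $\dim_k\Hom_{\hat{\CC}}(S\otimes L_\alpha,\mathbf 1)$ with the multiplicity of the simple object $S$ in $L_\alpha^*$. Everything else — the lower bound and the directness of the sum — is formal, relying only on (C2), \cref{uniqueness_bicomodules}, and \cref{im_c}.
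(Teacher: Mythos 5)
Your lower bound coincides with the paper's: injectivity of $c_{L_\alpha}$ from simplicity of the source (C2) together with nonvanishing (your counit computation $\epsilon\circ c_L=\operatorname{ev}_L$ just makes explicit what the paper asserts), and directness of the sum from \cref{uniqueness_bicomodules} and \cref{im_c}. Your upper bound, however, takes a genuinely different route. The paper never computes the right-comodule socle of $C$: it takes an arbitrary simple sub-bicomodule $W\sub C$, notes by \cref{C3} that $W$ is semisimple as a right comodule, and applies the counit diagram of Section 3.2 (any right subcomodule $V\sub C$ satisfies $V\sub\im c_V$) to each simple right summand $L_i$ of $W$, so that $W\sub\sum_i\im c_{L_i}=\sum_i\im c_{L_{\alpha_i}}$ by \cref{im_c}. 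That is purely formal and needs no Hom-space bookkeeping, whereas your route identifies $\operatorname{soc}(C)$ as a right comodule via the adjunction \cref{adjoint_functor} and compares multiplicities; it is more computational but, if completed, also delivers the right-comodule structure of $\operatorname{soc}(C)$ directly. Note that your parenthetical fallback (quoting the decomposition $C\cong\bigoplus_\alpha L_\alpha^*\otimes I(L_\alpha)$ from the preceding section) would be circular: in the paper that corollary is deduced from \cref{socle_of_C}.

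There is one genuine gap, in the multiplicity matching. The adjunction gives $\dim_k\Hom_{\mathbf{Mod}_C}(L,C)=\dim_k\Hom_{\hat{\CC}}(L,\mathbf{1})$, and your bookkeeping correctly identifies the latter with the multiplicity of $L$ in $\bigoplus_\alpha L_\alpha^*\otimes L_\alpha$ (using (C1) and $\End_{\CC}(S)\cong k$ for invertible $S$). But the multiplicity of $L$ in $\operatorname{soc}(C)$ is $\dim_k\Hom_{\mathbf{Mod}_C}(L,C)/\dim_k\End_{\mathbf{Mod}_C}(L)$, so the two counts agree only when $\End_{\mathbf{Mod}_C}(L)\cong k$. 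This strong form of Schur's lemma is not among the hypotheses ($k$ is not assumed algebraically closed) and does not follow from (C1) plus duality alone: for an ordinary coalgebra such as $C=\C^*$ over $k=\R$, the two counts differ by a factor of $\dim_k\End(L)$. Under the standing hypotheses it is true, but only because (C2) forces it — if $L^*\boxtimes L$ is a simple bicomodule, then $\End(L)^{\mathrm{op}}\otimes_k\End(L)$ embeds into its endomorphism division algebra, which rules out $\End(L)\neq k$; alternatively, your own lower-bound inclusion $L_\alpha^*\boxtimes L_\alpha\sub\operatorname{soc}(C)$ forces the multiplicity inequality that pins $\dim_k\End(L)$ to $1$. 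Either way, this step must be stated and argued; as written the sandwich does not close. The paper's argument sidesteps the issue entirely, which is one reason its "converse" inclusion is so short.
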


\begin{proof}
	For each $\alpha$ we have a nonzero, and thus injective, morphism $c_{L_{\alpha}}:L_{\alpha}^*\boxtimes L_{\alpha}\to \sigma(C)$.  Since the simple bicomodules $L_{\alpha}^*\boxtimes L_{\alpha}$, $L_{\beta}^*\boxtimes L_{\beta}$ are non-isomorphic for distinct $\alpha,\beta$ by \cref{uniqueness_bicomodules}, we obtain an inclusion $\bigoplus\limits_{\alpha}L_{\alpha}^*\boxtimes L_{\alpha}\sub \sigma(C)$.  Conversely, a simple sub-bicomodule $W$ of $C$ must be semisimple as a right $C$-comodule by \cref{C3}, and thus if $W=\bigoplus\limits_i L_i$ for simple right comodules $L_i$ then $W\sub\sum\limits_i\im c_{L_i}$.  Now \cref{im_c} completes the proof.
\end{proof}

\begin{cor}
	We have an isomorphism of right comodules:
	\[
	C\cong\bigoplus_{\alpha} L_{\alpha}^*\otimes I(L_{\alpha})
	\]
\end{cor}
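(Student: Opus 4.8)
The plan is to recognise $C$ as the injective envelope of its socle as a right comodule, and then to compute that injective envelope.

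First I would note that $C=\mathbf 1\otimes C=F_C(\mathbf 1)$, so $C$ is injective as a right $C$-comodule by \cref{adjoint_functor} (exactly as in the proof that $\mathbf{Mod}_C$ has enough injectives). By \cref{socle_of_C}, the socle of $C$ as a right comodule is $\bigoplus_\alpha L_\alpha^*\otimes L_\alpha$, where $L_\alpha^*\otimes L_\alpha$ carries the trivial right comodule structure of \ref{tensor_product_structure_trivial} underlying the bicomodule $L_\alpha^*\boxtimes L_\alpha$. Hence the canonical monomorphism $\bigoplus_\alpha L_\alpha^*\otimes L_\alpha=\sigma(C)\hookrightarrow C$ presents $C$ as an injective envelope of $\bigoplus_\alpha L_\alpha^*\otimes L_\alpha$, and by uniqueness of injective envelopes it is enough to check that $\bigoplus_\alpha L_\alpha^*\otimes I(L_\alpha)$ is another such injective envelope.

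The remaining step is to understand the functor $S\otimes(-)$ on comodules for $S$ an object of $\CC$. When $S$ is simple it is invertible, so by \ref{tensor_product_structure_trivial} it is an autoequivalence of $\mathbf{Mod}_C$; in particular it preserves injectivity, commutes with the socle, and sends the injective envelope $L\hookrightarrow I(L)$ to an injective envelope $S\otimes L\hookrightarrow S\otimes I(L)$. Since $\CC$ is semisimple and pointed, an arbitrary object $S$ of $\CC$ is a finite direct sum of invertible objects, so $S\otimes(-)$ still preserves injectivity and commutes with the socle (using that finite direct sums of injectives are injective and that the socle commutes with finite direct sums). Applying this with $S=L_\alpha^*$ shows that $L_\alpha^*\otimes I(L_\alpha)$ is injective with socle $L_\alpha^*\otimes L_\alpha$, and that the map $L_\alpha^*\otimes L_\alpha\hookrightarrow L_\alpha^*\otimes I(L_\alpha)$ obtained from $L_\alpha\hookrightarrow I(L_\alpha)$ is an injective envelope. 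Finally, taking the direct sum over $\alpha$: the coproduct $\bigoplus_\alpha L_\alpha^*\otimes I(L_\alpha)$ is injective because direct sums of injectives are injective, and its socle is $\bigoplus_\alpha L_\alpha^*\otimes L_\alpha$ because the socle functor commutes with arbitrary direct sums in $\mathbf{Mod}_C$; so it is an injective envelope of $\sigma(C)$, hence isomorphic to $C$.

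The one point that genuinely needs an argument is that the socle functor commutes with arbitrary (possibly infinite) direct sums in $\mathbf{Mod}_C$. This should follow because every simple comodule is of finite length, hence a compact object of $\hat{\CC}\cong\mathbf{Vec}(G,\omega)$, so any simple subcomodule of a coproduct already lies in a finite partial coproduct, where the socle is visibly the direct sum of the socles. Everything else is a formal consequence of \cref{socle_of_C}, the injectivity of $C=F_C(\mathbf 1)$, and the behaviour of tensoring by objects of $\CC$.
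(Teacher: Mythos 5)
Your proof is correct and follows essentially the same route as the paper: both identify $C$ and $\bigoplus_\alpha L_\alpha^*\otimes I(L_\alpha)$ as injective right comodules with socle $\bigoplus_\alpha L_\alpha^*\otimes L_\alpha$ (via \cref{socle_of_C}) and conclude by uniqueness of injective envelopes. You simply make explicit the steps the paper leaves implicit (injectivity of $C=F_C(\mathbf 1)$, that tensoring by objects of $\CC$ preserves injective envelopes, and that the socle commutes with direct sums), which is fine.
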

\begin{proof}
	By \cref{socle_of_C} these right comodules have isomorphic socles.  Since injectives are determined by their socles, we are done.
\end{proof}

\section{Layers of the coradical filtration}

\subsection{}  We would like to prove that if $V$ is a finite-length right $C$-comodule then $\ell\ell(\im c_{V})=\ell\ell(V)$, i.e.~ the Loewy length of $\im c_V$ as bicomodule is equal to the Loewy length of $V$ as a right comodule. First we prove a lemma.

\begin{lemma}\label{injectivity_res_lemma}
	Suppose that $W$ is a right comodule of finite length with simple socle $L$.  Choose a splitting $\tilde{L}\sub W^*$  (in $\CC$) of the epimorphism $W^*\to L^*$ so that we obtain a right subcomodule $\tilde{L}\otimes W$ of $W^*\boxtimes W$.  Then the restriction of $c_W$ to $\tilde{L}\otimes W$ is injective.
\end{lemma}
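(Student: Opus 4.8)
The plan is to exploit \cref{ker_of_c}, which tells us that $\ker c_W$ contains $U^\perp \boxtimes U$ for every right subcomodule $U$ of $W$, together with a dimension count of the relevant pieces inside $W^* \boxtimes W$. First I would observe that since $W$ has finite length with simple socle $L$, the socle of the right comodule $W^*\boxtimes W$ — which is $\sigma(W^*)\otimes \sigma(W)$ as an object of $\hat{\CC}$ — contains $\tilde L \otimes L$, and since $\tilde L \cong L^*$ as objects of $\CC$ (via the splitting) and $L$ is simple, $\tilde L\otimes L$ has simple socle too (it is $\sigma(L^*)\otimes L$ tensored appropriately; more carefully, its socle as a right comodule is $\tilde L \otimes L$ itself if $L$ is simple, but $\tilde L$ need not be a subcomodule — I would instead argue directly that $c_W$ restricted to $\tilde L\otimes W$ is injective on the socle and then use that injectivity of a morphism of comodules can be checked on socles only when the target is the socle; better to set this up as below).

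The cleaner route: I would show $\ker c_W \cap (\tilde L \otimes W) = 0$ directly. Suppose $N\subseteq \tilde L\otimes W$ is a nonzero subcomodule contained in $\ker c_W$. Since $W$ has finite length, so does $\tilde L\otimes W$, hence $N$ contains a simple subcomodule, and every simple subcomodule of $\tilde L\otimes W$ lies in its socle $\tilde L\otimes L$. So it suffices to prove $c_W$ is injective on $\tilde L\otimes L$. Now I would use the commutative diagram \eqref{square} / \ref{matrix_coeff_square} applied to the inclusion $\iota: L\hookrightarrow W$: the composite $\tilde L\otimes L \hookrightarrow W^*\otimes L \xto{} L^*\otimes L \xto{c_L} C$ agrees with $c_W|_{\tilde L\otimes L}$ up to the maps in that square, and because $\tilde L$ was chosen as a splitting of $W^*\onto L^*$, the map $\tilde L\otimes L\to L^*\otimes L$ is an isomorphism. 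Since $c_L: L^*\boxtimes L\to C$ is nonzero and $L^*\boxtimes L$ is simple (by (C2), though here we only need $L^*\boxtimes L \cong L^*\otimes L$ is simple as a right comodule since $L^*\otimes L$ has $L$ in its socle and $L$ simple — actually $L^*\boxtimes L$ simple as bicomodule by (C2), hence $c_L$ injective), $c_L$ is injective. Chasing the diagram, $c_W|_{\tilde L\otimes L}$ is injective, completing the argument.

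The main obstacle I anticipate is the compatibility bookkeeping: making \ref{matrix_coeff_square} apply verbatim requires identifying the map $\tilde L\otimes W \to W^*\otimes W$ (induced by $\tilde L\hookrightarrow W^*$) with the relevant edge of that diagram, and checking that restricting further to $\tilde L\otimes L$ and then mapping down to $L^*\otimes L$ via $W^*\otimes L\to L^*\otimes L$ recovers precisely the isomorphism $\tilde L\otimes L\xto{\sim}L^*\otimes L$ that the splitting gives. This is where one must be careful that "$\tilde L$ splits $W^*\onto L^*$" means exactly that $\tilde L \to L^*$ is an isomorphism, so the diagram edge $\tilde L\otimes L\to L^*\otimes L$ is invertible, and hence injectivity of $c_L$ transfers to injectivity of $c_W$ on the socle. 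The rest — reducing injectivity of a comodule morphism on a finite-length comodule to injectivity on its socle, which holds because any nonzero subcomodule meets the socle nontrivially — is routine.
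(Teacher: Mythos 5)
Your argument is correct and is essentially the paper's proof: reduce injectivity to the socle $\tilde L\otimes L$, use the commutative square of \cref{matrix_coeff_square} (equivalently \cref{ker_of_c}) for $L\hookrightarrow W$ to identify $c_W|_{\tilde L\otimes L}$ with $c_L$ precomposed with the isomorphism $\tilde L\otimes L\xto{\sim}L^*\otimes L$ coming from the splitting, and conclude from the injectivity of $c_L$ (nonzero morphism out of the simple bicomodule $L^*\boxtimes L$). No substantive differences from the paper's route.
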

\begin{proof}
	Since this restriction defines a morphism of right comodules $\tilde{L}\otimes W\to C$, it suffices to show that it is injective on the socle $\sigma(\tilde{L}\otimes W)=\tilde{L}\otimes L$.  However $\tilde{L}\otimes L$ is a splitting of the head of the bicomodule $W^*\boxtimes L$, and the restriction of $c_W$ to $W^*\boxtimes L$ has $L^\perp\boxtimes L=\rho(W^*\boxtimes L)$ in its kernel by \cref{ker_of_c} , and thus factors through $(W^*\boxtimes L)/(L^\perp\boxtimes L)\cong L^*\boxtimes L\xto{c_L}C$.  In summary we have a commutative diagram
	\[
	\xymatrix{\tilde{L}\otimes L\ar[r]\ar[dr]_{c_W|_{\tilde{L}\otimes L}} & W^*\otimes L\ar[r] & L^*\boxtimes L\ar[dl]^{c_L}\\
	& C &}
	\]
	Since the composition $\tilde{L}\otimes L\to L^*\boxtimes L$ is an isomorphism and $c_L$ is injective we are done.
\end{proof}

\begin{lemma}
Let $V$ be a finite-length right comodule with $\ell\ell(V)=n$.  Then 
\[
F_k=\sum\limits_{i+j=k} \rho^{n-i}(V^*)\boxtimes\sigma_{j}(V)=\sum\limits_{i+j=k} \sigma_{n-i}(V)^\perp\boxtimes\sigma_{j}(V)
\] 
is a semisimple filtration of $V^*\boxtimes V$ such that $F_1=0$ and $F_{2n}=V^*\boxtimes V$.  
\end{lemma}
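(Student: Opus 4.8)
The plan is to work inside the bicomodule $V^*\boxtimes V$ with the sub-bicomodules $A_{i,j}:=\rho^{n-i}(V^*)\boxtimes\sigma_j(V)$, so that $F_k=\sum_{i+j=k}A_{i,j}$; each $A_{i,j}$ really is a sub-bicomodule since $\boxtimes$ of subobjects is a subobject (exactness of $\otimes$), and the two displayed descriptions of $F_k$ agree by the earlier identity $\sigma_m(V)^\perp=\rho^m(V^*)$. The boundary values are immediate: as $\ell\ell(V)=n$ we have $\sigma_n(V)=V$, hence $\rho^n(V^*)=\sigma_n(V)^\perp=0$; so the two summands $\rho^n(V^*)\boxtimes\sigma_1(V)$ and $\rho^{n-1}(V^*)\boxtimes\sigma_0(V)$ of $F_1$ vanish, while $F_{2n}=\sigma_0(V)^\perp\boxtimes\sigma_n(V)=V^*\boxtimes V$. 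The filtration property $F_k\subseteq F_{k+1}$ holds because $\sigma_j(V)\subseteq\sigma_{j+1}(V)$ gives $A_{i,j}\subseteq A_{i,j+1}$ with $i+(j+1)=k+1$; and since $\sigma_0(V)=0$ and $\rho^n(V^*)=0$, the only nonzero summands have $1\le i,j\le n$.

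The substance is that each layer $F_k/F_{k-1}$ is semisimple as a bicomodule, and I would establish this through the ``corner quotients''. Fix $i+j=k$. Since $\rho^{n-i+1}(V^*)\subseteq\rho^{n-i}(V^*)$ and $\sigma_{j-1}(V)\subseteq\sigma_j(V)$ we get $A_{i-1,j}\subseteq A_{i,j}$ and $A_{i,j-1}\subseteq A_{i,j}$, and both $A_{i-1,j},A_{i,j-1}$ lie in $F_{k-1}$. Using biexactness of $\otimes$ on $\hat{\CC}$ (in particular the standard identity $A'\otimes B\cap A\otimes B'=A'\otimes B'$ inside $A\otimes B$ for $A'\subseteq A$, $B'\subseteq B$), one obtains a canonical isomorphism of bicomodules
\[
A_{i,j}/(A_{i-1,j}+A_{i,j-1})\;\cong\;\big(\rho^{n-i}(V^*)/\rho^{n-i+1}(V^*)\big)\boxtimes\big(\sigma_j(V)/\sigma_{j-1}(V)\big),
\]
i.e.\ the box product of a layer of the radical filtration of $V^*$ (a semisimple left comodule) with a layer of the socle filtration of $V$ (a semisimple right comodule).

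The remaining ingredient is that $M\boxtimes N$ is semisimple as a bicomodule whenever $M$ is a semisimple left comodule and $N$ a semisimple right comodule: writing $M=\bigoplus_a M_a$ and $N=\bigoplus_b N_b$ as sums of simples, exactness of $\otimes$ gives $M\boxtimes N=\bigoplus_{a,b}M_a\boxtimes N_b$, and since $(-)^*\colon\mathsf{mod}_C\to{}_C\mathsf{mod}$ is an antiequivalence each $M_a$ equals $L_a^*$ for a simple right comodule $L_a$, so $M_a\boxtimes N_b=L_a^*\boxtimes N_b$ is simple by (C2). Hence each corner quotient above is semisimple. To assemble the layer, note that $A_{i-1,j}+A_{i,j-1}\subseteq A_{i,j}\cap F_{k-1}$, so the image of $A_{i,j}$ in $F_k/F_{k-1}$ is a quotient of the semisimple object $A_{i,j}/(A_{i-1,j}+A_{i,j-1})$, hence semisimple; and $F_k/F_{k-1}=\sum_{i+j=k}(\text{image of }A_{i,j})$ is a finite sum of semisimple subobjects, hence semisimple.

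The step I expect to demand the most care is the corner-quotient isomorphism: one must verify that the object-level identification $(A\otimes B)/(A'\otimes B+A\otimes B')\cong(A/A')\otimes(B/B')$ respects \emph{both} the left coaction inherited from $V^*$ and the right coaction inherited from $V$ --- which amounts to checking that $A_{i,j}$, $A_{i-1,j}$, $A_{i,j-1}$ are sub-bicomodules and that the quotient maps are morphisms of bicomodules --- and that the relevant intersections of images collapse as claimed, both of which rest on the biexactness of $\otimes$ on $\hat{\CC}$. Everything else (the boundary computations, the filtration property, and that a finite sum of semisimple subobjects is semisimple) is routine.
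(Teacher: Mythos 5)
Your proposal is correct and is essentially the paper's argument spelled out in full: the paper's proof is the one-line observation that the tensor product of semisimple filtrations is again a semisimple filtration, and your corner-quotient computation $A_{i,j}/(A_{i-1,j}+A_{i,j-1})\cong\bigl(\rho^{n-i}(V^*)/\rho^{n-i+1}(V^*)\bigr)\boxtimes\bigl(\sigma_j(V)/\sigma_{j-1}(V)\bigr)$, together with (C2) and the duality antiequivalence to see that such box products of semisimple layers are semisimple bicomodules, is exactly the standard verification of that fact. No gaps; you have simply made explicit the boundary checks and the reliance on (C2) that the paper leaves implicit.
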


\begin{proof}
	The tensor product of semisimple filtrations is again a semisimple filtration.
\end{proof}

We now observe that $F_n=\sum\limits_{i}\sigma_i(V)^\perp\boxtimes\sigma_i(V)\sub\ker c_V$ and thus $F_\bullet$ induces a semisimple filtration of $\im c_V$ of length at most $n=\ell\ell(V)$.  It follows that $\ell\ell(\im c_V)\leq\ell\ell(V)$.  

On the other hand $V$ contains a subquotient $W$ with $\ell\ell(W)=\ell\ell(V)$ such that $W$ has a simple socle.  Since $\im c_W\sub\im c_V$, if we can show that $\ell\ell(\im c_W)\geq \ell\ell(W)=\ell\ell(V)$ then we will have that $\ell\ell(\im c_V)=\ell\ell(V)$.  

By \cref{injectivity_res_lemma} we know that $\im c_W$ contains a right subcomodule of the form $\tilde{L}\otimes W$ for an object $\tilde{L}$ of $\CC$, and thus its Loewy length as a right comodule is at least $\ell\ell(W)$, which by \cref{C3} implies its Loewy length as a bicomodule is at least $\ell\ell(W)$.  We have now finished showing:
\begin{prop}\label{Loewy_length_c_V}
	For a finite length right comodule $V$, $\ell\ell(V)=\ell\ell(\im c_V)$.
\end{prop}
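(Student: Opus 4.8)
The plan is to establish the two inequalities $\ell\ell(\im c_V)\le\ell\ell(V)$ and $\ell\ell(\im c_V)\ge\ell\ell(V)$ separately, using the two lemmas just proved, and then combine them. Write $n=\ell\ell(V)$.

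For the inequality $\ell\ell(\im c_V)\le n$, I would use the semisimple filtration $F_\bullet$ of $V^*\boxtimes V$ constructed above. The key point is that its middle term $F_n=\sum_i\sigma_i(V)^\perp\boxtimes\sigma_i(V)$ is contained in $\ker c_V$: each summand has the form $W^\perp\boxtimes W$ for the right subcomodule $W=\sigma_i(V)$ of $V$, hence lies in $\ker c_V$ by \cref{ker_of_c}. Therefore $\im c_V$ is a quotient of $(V^*\boxtimes V)/F_n$, and the images $0=F_n/F_n\sub F_{n+1}/F_n\sub\cdots\sub F_{2n}/F_n$ form a semisimple filtration of length $n$ of the latter. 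Since a quotient of a finite-length object of Loewy length at most $n$ again has Loewy length at most $n$, we conclude $\ell\ell(\im c_V)\le n$.

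For the reverse inequality, I would first invoke the standard fact that $V$ admits a subquotient $W$ that has a simple socle and satisfies $\ell\ell(W)=\ell\ell(V)$; such a $W$ is extracted by descending through the socle (equivalently radical) filtration and isolating a \emph{local} subquotient realizing the full Loewy length, using that the bottom layer $\rho^{n-1}(V)$ is semisimple. Since $\im c_W$ is a sub-bicomodule of $\im c_V$ by the corollary following \cref{inj_surj}, it is enough to show $\ell\ell(\im c_W)\ge\ell\ell(W)$. Put $L=\sigma(W)$ and choose a splitting $\tilde L\sub W^*$ of the canonical epimorphism $W^*\to L^*$ as in \cref{injectivity_res_lemma}; that lemma says $c_W$ restricts to a monomorphism on the right subcomodule $\tilde L\otimes W$ of $W^*\boxtimes W$, so $\im c_W$ contains a copy of $\tilde L\otimes W$ as a right subcomodule. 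Because $\tilde L\cong L^*$ is a simple, hence invertible, object of $\CC$, tensoring by it is an autoequivalence of $\mathbf{Mod}_C$, so $\ell\ell(\tilde L\otimes W)=\ell\ell(W)$ as right comodules; thus the Loewy length of $\im c_W$ as a right comodule is at least $\ell\ell(W)$. Finally \cref{C3} tells us that for a finite-length bicomodule the Loewy length as a right comodule is at most the Loewy length as a bicomodule, which upgrades this to $\ell\ell(\im c_W)\ge\ell\ell(W)=n$ as a bicomodule, and hence $\ell\ell(\im c_V)\ge n$.

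The main obstacle is the lower bound. Two points there need care: first, producing the subquotient $W$ with simple socle that realizes the full Loewy length $n$ — this is classical representation theory, but does require choosing a simple subcomodule of the socle of maximal depth together with the appropriate sub- or quotient comodule of $V$ having it as socle; and second, the passage through \cref{C3}, which is only a one-directional inequality, but happens to run in exactly the direction needed to convert the right-comodule estimate for $\im c_W$ into a bicomodule estimate. The upper bound, by contrast, is essentially formal once the semisimple filtration $F_\bullet$ is in hand.
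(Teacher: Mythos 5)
Your proposal is correct and follows essentially the same route as the paper: the upper bound via the semisimple filtration $F_\bullet$ with $F_n\sub\ker c_V$ (from \cref{ker_of_c}), and the lower bound via a subquotient $W$ with simple socle realizing $\ell\ell(V)$, \cref{injectivity_res_lemma}, and the one-directional comparison of \cref{C3}. One tiny inaccuracy: $\tilde L\cong L^*$ need not be simple in $\CC$ (a simple comodule can decompose in $\CC$), but since it is a nonzero direct sum of invertible objects, tensoring with it still gives $\ell\ell(\tilde L\otimes W)=\ell\ell(W)$, so the argument is unaffected.
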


\begin{prop}\label{socle_filtration_C_general}
	We have
	\[
	\sigma_i(C)=\sum\limits_{\ell\ell(V)\leq i}\im c_{V}.
	\]
\end{prop}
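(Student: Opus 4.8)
The plan is to prove the two inclusions $\sigma_i(C)\supseteq\sum_{\ell\ell(V)\le i}\im c_V$ and $\sigma_i(C)\subseteq\sum_{\ell\ell(V)\le i}\im c_V$ separately, using \cref{Loewy_length_c_V} for the first and the description $C=\sum\im c_V$ (from the end of \cref{sec_prelims}, or rather section 3) together with \cref{inj_surj} for the second.

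For the inclusion $\supseteq$: if $V$ is a finite-length right comodule with $\ell\ell(V)\le i$, then by \cref{Loewy_length_c_V} the bicomodule $\im c_V$ has Loewy length $\ell\ell(\im c_V)=\ell\ell(V)\le i$. Since $\sigma_i$ is the $i$-th term of the socle filtration, any sub-bicomodule of $C$ of Loewy length at most $i$ is contained in $\sigma_i(C)$; hence $\im c_V\subseteq\sigma_i(C)$, and summing over all such $V$ gives the inclusion.

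For the inclusion $\subseteq$: I would argue that $\sigma_i(C)$ is a sum of images of matrix coefficient morphisms of comodules of Loewy length $\le i$. Since $C$ is the direct limit of its finite-length right subcomodules, and $\sigma_i$ commutes with direct limits (filtered colimits are exact and socle filtrations are compatible with the inclusions), it suffices to show that for every finite-length right subcomodule $V_0\subseteq C$, we have $\sigma_i(V_0)\cap(\text{as a bicomodule})\subseteq\sum_{\ell\ell(V)\le i}\im c_V$; more precisely I would work with $\sigma_i(C)\cap \im c_{V_0}$ for $V_0$ ranging over finite-length right subcomodules whose images cover $C$. Fix such a $V_0$. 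Then $\sigma_i(\im c_{V_0})$ is a sub-bicomodule of $\im c_{V_0}$; by \cref{inj_surj}, $\im c_{V_0}$ contains $\im c_U$ as a sub-bicomodule for every subquotient $U$ of $V_0$. The key point is that $\sigma_i(\im c_{V_0})$, being the maximal sub-bicomodule of Loewy length $\le i$, should itself be expressible as $\im c_W$ for an appropriate subquotient $W$ of $V_0$ with $\ell\ell(W)\le i$ — indeed taking $W=V_0/\rho^? $ or rather the subquotient realizing the truncation of the Loewy structure. Concretely, I expect that $\sigma_i(\im c_{V_0})=\im c_{\sigma_?}$ where one uses that $c_V$ is functorial in $V$ and that the semisimple filtration $F_\bullet$ from the lemma preceding \cref{Loewy_length_c_V} identifies $\sigma_j(\im c_V)$ with the image of $F_{n+j}$, which in turn is $\im c_{W}$ for $W$ a suitable subquotient of $V$ of Loewy length $j$.

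The main obstacle will be this last identification: showing that the truncation $\sigma_i(\im c_{V_0})$ of the bicomodule socle filtration is again of the form $\im c_W$ for a genuine subquotient $W$ of $V_0$ with $\ell\ell(W)\le i$. The natural candidate is $W=V_0/\rho^{i}(V_0)$ or a subcomodule thereof; one needs that $\im c_{V_0/\rho^i(V_0)}$ is exactly $\sigma_i(\im c_{V_0})$, not just contained in it, and for this I would combine \cref{inj_surj} (giving $\im c_{V_0/\rho^i(V_0)}\subseteq\im c_{V_0}$ and $\ell\ell\le i$, hence $\subseteq\sigma_i$) with a counting or surjectivity argument using \cref{Loewy_length_c_V} applied to $V_0/\rho^i(V_0)$ — since $\ell\ell(V_0/\rho^i(V_0))=\min(i,\ell\ell(V_0))$ and one checks the socle layers match. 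Once the equality $\sigma_i(\im c_{V_0})=\im c_{V_0/\rho^i(V_0)}$ is in hand, taking the union over all $V_0$ covering $C$ and noting $\ell\ell(V_0/\rho^i(V_0))\le i$ yields $\sigma_i(C)=\bigcup_{V_0}\sigma_i(\im c_{V_0})\subseteq\sum_{\ell\ell(V)\le i}\im c_V$, completing the proof.
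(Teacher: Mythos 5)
Your first inclusion ($\supseteq$) is exactly the paper's argument and is fine. The second inclusion ($\subseteq$) is where the proposal has a genuine gap, and the specific route you sketch does not work. The identification you flag as the ``main obstacle'' --- that $\sigma_i(\im c_{V_0})$ equals $\im c_{V_0/\rho^i(V_0)}$, or that the semisimple filtration $F_\bullet$ computes the socle filtration of $\im c_{V_0}$ via $\sigma_j(\im c_{V_0})=c_{V_0}(F_{n+j})$ --- is false in general. A semisimple filtration only bounds the socle filtration from below, so one gets $c_{V_0}(F_{n+j})\sub\sigma_j(\im c_{V_0})$ and nothing more. Concretely, let $C$ be the coalgebra dual to the $3$-dimensional algebra of upper triangular $2\times 2$ matrices, with simple comodules $L_1,L_2$, and let $W=\sigma_2(I(L_1))$, a length-two comodule with socle $L_1$ and top $L_2$. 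Then $\im c_W=C$, so $\sigma_1(\im c_W)=\sigma_1(C)=L_1^*\boxtimes L_1\oplus L_2^*\boxtimes L_2$ is two-dimensional, while $\im c_{W/\rho(W)}=\im c_{L_2}=L_2^*\boxtimes L_2$ is one-dimensional; the proposed equality fails already for $i=1$. So the reduction to subquotients of a fixed $V_0$ does not close, and as written the $\subseteq$ inclusion is unproved.

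The missing idea is much simpler and avoids analyzing $\sigma_i(\im c_{V_0})$ altogether. The bicomodule $\sigma_i(C)$ is, like any object of $\mathbf{Mod}_C$ in $\hat{\CC}$, the sum of its finite-length right subcomodules $V$. For each such $V$ two facts apply: first, by \cref{C3} (which rests on (C2)), the right-comodule Loewy length of $V$ is at most its bicomodule Loewy length inside $\sigma_i(C)$, hence $\ell\ell(V)\leq i$; second, by the diagram involving $\epsilon_V^*$ at the end of Section~3, every finite-length right subcomodule $V\sub C$ satisfies $V\sub\im c_V$. Summing over all such $V$ gives $\sigma_i(C)\sub\sum_{\ell\ell(V)\leq i}\im c_V$ immediately. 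Note in particular that (C2), via \cref{C3}, is doing real work here --- it is what lets you pass from the bicomodule bound ``Loewy length $\leq i$'' to a bound on the right-comodule Loewy length of $V$ --- and this ingredient does not appear anywhere in your sketch.
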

\begin{proof}
	By \cref{Loewy_length_c_V}, $\im c_V$ has Loewy length equal to that of $V$, so if $\ell\ell(V)\leq i$ then $\im c_V=\sigma_i(\im c_V)\sub\sigma_i(C)$.  Conversely if $V\sub \sigma_{i}(C)$ is a right sub-comodule then by \cref{C3} $\ell\ell(V)\leq i$ and so $V\sub\im c_{V}\sub\sigma_i(C)$.  Since $\sigma_i(C)$ is the sum of its right subcomodules, we are done.
\end{proof}

\subsection{}  From now on, fix $n\in\N$ with $n\geq 1$.  We make a finiteness assumption on the comodule category $\mathsf{mod}_C$.
\begin{enumerate}
	\item[\textbf{(C3-}$n$\textbf{)}] Assume that if $L,L'$ are simple right comodules, then $[\sigma_n(I(L)):L']<\infty$.	
\end{enumerate}
Note that (C3-$n$) implies (C3-$m$) whenever $m\leq n$.

\subsection{} For each pair of simple right comodules $L,L'$ and for each $i\leq n$ we define $H_{L',L}^i$ to be the right subcomodule of $\sigma_i(I(L'))$ that is generated by a splitting of the isotypic component of $L$ in $\sigma_i(I(L'))/\sigma_{i-1}(I(L'))$.  In particular it is zero if and only if $[\sigma_i(I(L'))/\sigma_{i-1}(I(L')):L]=0$.  By (C3-$n$), $H_{L',L}^i$ is a finite length right comodule.  Further we have for $i\leq n$
\begin{equation}\label{socle_in_sum}
\sigma_i(I(L'))=\sum\limits_{L\text{ simple}}\sum\limits_{j\leq i}H_{L',L}^j.
\end{equation}
Write $H_{\alpha,L}^i:=H_{L_{\alpha},L}^i$.  

\begin{lemma}\label{socle_filtration_specific}  For $i\leq n$,
	\[
	\sigma_i(C)=\sum\limits_{\alpha}\sum\limits_{L \operatorname{ simple}}\sum\limits_{j\leq i}\im c_{H_{\alpha,L}^j}
	\]
\end{lemma}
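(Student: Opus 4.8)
The plan is to prove the two inclusions separately, using \cref{socle_filtration_C_general} to translate everything into a statement about images of matrix coefficient morphisms of finite-length comodules, \cref{im_c} to absorb twists by simple objects of $\CC$, and \eqref{socle_in_sum} to break the socle layer $\sigma_i(I(L_\alpha))$ into the pieces $H_{\alpha,L}^j$. For the inclusion $\supseteq$, note that for $j\leq i$ the comodule $H_{\alpha,L}^j$ is a finite-length right subcomodule of $\sigma_j(I(L_\alpha))$, so $\ell\ell(H_{\alpha,L}^j)\leq j\leq i$; hence $\im c_{H_{\alpha,L}^j}$ is one of the terms whose sum equals $\sigma_i(C)$ in \cref{socle_filtration_C_general}, so $\im c_{H_{\alpha,L}^j}\sub\sigma_i(C)$, and summing over $\alpha$, over simple $L$, and over $j\leq i$ gives this direction.

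For the inclusion $\subseteq$, I would start from \cref{socle_filtration_C_general}, which writes $\sigma_i(C)$ as the sum of $\im c_V$ over finite-length right comodules $V$ with $\ell\ell(V)\leq i$, so it suffices to bound each such $\im c_V$. Embed such a $V$ into an injective envelope; since $V$ has finite length this envelope is a \emph{finite} direct sum $\bigoplus_{k=1}^m I(L_k)$ of indecomposable injectives, and since the socle filtration is functorial and commutes with finite direct sums, $\ell\ell(V)\leq i$ forces $V\sub\bigoplus_k\sigma_i(I(L_k))$. Projecting $V$ onto the summands yields finite-length subcomodules $V_k\sub\sigma_i(I(L_k))$ with $V\sub\bigoplus_k V_k$, so by \cref{inj_surj} applied to the subcomodule inclusion together with the lemma $\im c_{W_1+W_2}=\im c_{W_1}+\im c_{W_2}$ (applied inductively to a finite direct sum), $\im c_V\sub\sum_k\im c_{V_k}$. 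This reduces the claim to showing that $\im c_W\sub\sum_\alpha\sum_L\sum_{j\leq i}\im c_{H_{\alpha,L}^j}$ for every finite-length subcomodule $W$ of $\sigma_i(I(L'))$, where $L'$ is an arbitrary simple right comodule.

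To finish, write $L'\cong S\otimes L_\alpha$ with $S$ a simple object of $\CC$, using (C1). Because $S\otimes(-)$ is an exact autoequivalence of right comodules, it carries $I(L_\alpha)$ to an injective envelope of $L'$, so $I(L')\cong S\otimes I(L_\alpha)$ compatibly with socle filtrations; hence $W\sub\sigma_i(I(L'))\cong S\otimes\sigma_i(I(L_\alpha))$ corresponds to $S\otimes W'$ for a finite-length subcomodule $W'\sub\sigma_i(I(L_\alpha))$, and $\im c_W=\im c_{S\otimes W'}=\im c_{W'}$ by \cref{im_c}. Now \eqref{socle_in_sum} gives $\sigma_i(I(L_\alpha))=\sum_L\sum_{j\leq i}H_{\alpha,L}^j$, and since $W'$ has finite length it lies in a finite partial sum of these $H_{\alpha,L}^j$; applying \cref{inj_surj} and the sum lemma once more yields $\im c_{W'}\sub\sum_L\sum_{j\leq i}\im c_{H_{\alpha,L}^j}$, which is contained in the right-hand side. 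Combining with the $\supseteq$ direction completes the proof.

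I expect the main point requiring care to be the two successive reductions in the $\subseteq$ direction: first to finite-length subcomodules of $\sigma_i(I(L'))$ via the injective envelope, and then to the fixed representatives $L_\alpha$ via the twist by $S$. Neither step is deep — it rests only on the finiteness of $V$, of each $V_k$, and of $W'$ (used to pass to finite subsums of the possibly infinite sums $\sum_L$), on the functoriality of the socle filtration, and on $S\otimes(-)$ being an exact autoequivalence compatible with injective envelopes and socle filtrations — but this is exactly the bookkeeping where an oversight would most easily slip in, so I would state each compatibility explicitly.
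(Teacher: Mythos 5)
Your proposal is correct and follows essentially the same route as the paper: reduce via \cref{socle_filtration_C_general} to bounding $\im c_V$ for $\ell\ell(V)\leq i$, absorb twists by simple objects of $\CC$ using \cref{im_c}, land inside $\sigma_i(I(L_\alpha))$, and finish with \eqref{socle_in_sum} together with \cref{inj_surj} and the sum lemma, using finite length of the comodule to pass to a finite subsum. The only (harmless) cosmetic difference is in the intermediate decomposition: the paper writes $\im c_V$ as a sum over quotients of $V$ with simple socle and embeds each into an indecomposable injective, whereas you embed $V$ into its finite injective envelope and project onto the indecomposable summands.
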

\begin{proof}
	Since $\ell\ell(H_{\alpha,L}^j)\leq j\leq i$, by \cref{socle_filtration_C_general} it suffices to show that $\im c_V$ is contained in the RHS whenever $V$ is a right comodule of Loewy length less than or equal to $i$.  However in this case $\im c_V=\sum\limits_{W}\im c_W$ where the sum runs over quotients of $V$ with simple socles.  Note that $\ell\ell(W)\leq \ell\ell(V)\leq i$ for all such $W$.  On the other hand, if $W$ has a simple socle $L'$ then after potentially twisting $W$ by a simple object $S$ (which won't change $\im c_{W}$) we may assume $L'\cong L_{\alpha}$ for some $\alpha$, and then $I(L_{\alpha})$ is the injective envelope of $W$.  If $\ell\ell(W)\leq i$ then $W\sub\sigma_i(I(L_{\alpha}))$ under an embedding of $W$ in $I(L_{\alpha})$.  Therefore by \ref{socle_in_sum},
	\[
	W\sub \sum\limits_{L\text{ simple}}\sum\limits_{j\leq i}H_{\alpha,L}^j.
	\] 
	and so there exists finitely many simple right comodules $L_1,\dots,L_n$ such that  
	\[
	W\sub \sum\limits_{k,\ j\leq i}H_{\alpha,L_k}^j
	\]
	and hence
	\[
	\im c_W\sub \sum\limits_{k,\ j\leq i}\im c_{H_{\alpha,L_k}^j}.
	\]
\end{proof}

\subsection{} We may now state the main theorem.

\begin{thm}\label{main_thm}
   For $i\leq n$, 
	\[
	[\sigma_i(C)/\sigma_{i-1}(C):L^*\boxtimes L']=[\sigma_i(I(L))/\sigma_{i-1}(I(L)):L'].
	\]
\end{thm}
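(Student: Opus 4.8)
The plan is to reduce to the case where $L$ is one of the chosen orbit representatives $L_\alpha$, and then to pin down the single multiplicity $[\sigma_i(C)/\sigma_{i-1}(C):L_\alpha^*\boxtimes L']$ by feeding a precise description of the top Loewy layer of $\im c_W$ into \cref{socle_filtration_specific}. For the reduction, write $L\cong S\otimes L_\alpha$ with $S$ simple and $L_\alpha$ an orbit representative. Since $S\otimes(-)$ is an autoequivalence of $\mathbf{Mod}_C$ (see \ref{tensor_product_structure_trivial}), it carries the injective envelope $I(L_\alpha)$ to $I(L)$, so $\sigma_i(I(L))/\sigma_{i-1}(I(L))\cong S\otimes\bigl(\sigma_i(I(L_\alpha))/\sigma_{i-1}(I(L_\alpha))\bigr)$ and hence $[\sigma_i(I(L))/\sigma_{i-1}(I(L)):L']=[\sigma_i(I(L_\alpha))/\sigma_{i-1}(I(L_\alpha)):S^*\otimes L']$; on the other side $L^*\cong L_\alpha^*\otimes S^*$, so $L^*\boxtimes L'\cong L_\alpha^*\boxtimes(S^*\otimes L')$ by \cref{associativity_bicom_iso}. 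Thus it suffices to prove $[\sigma_i(C)/\sigma_{i-1}(C):L_\alpha^*\boxtimes L']=[\sigma_i(I(L_\alpha))/\sigma_{i-1}(I(L_\alpha)):L']$ for each orbit representative $L_\alpha$.

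The heart of the argument is the following claim: if $W$ has finite length with simple socle $L$ and $n=\ell\ell(W)$, then the top Loewy layer $\im c_W/\sigma_{n-1}(\im c_W)$ is isomorphic, as a bicomodule, to $L^*\boxtimes\bigl(W/\sigma_{n-1}(W)\bigr)$. Since $F_\bullet$ is the tensor product of the radical filtration of $W^*$ with the socle filtration of $W$, its top piece is $F_{2n}/F_{2n-1}\cong\operatorname{head}(W^*)\boxtimes\bigl(W/\sigma_{n-1}(W)\bigr)$, and dualizing the socle of $W$ gives $\operatorname{head}(W^*)\cong L^*$. As $F_n\sub\ker c_W$ and $\ell\ell(\im c_W)=\ell\ell(W)=n$ by \cref{Loewy_length_c_V}, the semisimple filtration of $\im c_W$ induced by $F_\bullet$ has length at most $n$, its $(n-1)$-st term lies in $\sigma_{n-1}(\im c_W)$, and therefore $\im c_W/\sigma_{n-1}(\im c_W)$ is a bicomodule quotient of the semisimple bicomodule $F_{2n}/F_{2n-1}\cong L^*\boxtimes(W/\sigma_{n-1}(W))$. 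On the other hand \cref{injectivity_res_lemma} produces a right subcomodule $\tilde L\otimes W\sub\im c_W$ with $\tilde L\cong L^*$ in $\CC$; a comparison of Loewy lengths shows $\sigma_{n-1}(\im c_W)\cap(\tilde L\otimes W)=\sigma_{n-1}(\tilde L\otimes W)$, so the right comodule $L^*\otimes(W/\sigma_{n-1}(W))\cong(\tilde L\otimes W)/\sigma_{n-1}(\tilde L\otimes W)$ embeds in $\im c_W/\sigma_{n-1}(\im c_W)$. A semisimple bicomodule that is simultaneously a quotient of $L^*\boxtimes(W/\sigma_{n-1}(W))$ and contains $L^*\otimes(W/\sigma_{n-1}(W))$ as a right comodule must, by counting composition factors, be all of $L^*\boxtimes(W/\sigma_{n-1}(W))$; this proves the claim.

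With the claim in hand we bound $[\sigma_i(C)/\sigma_{i-1}(C):L_\alpha^*\boxtimes L']$ from above. By \cref{socle_filtration_specific}, $\sigma_i(C)=\sum_{\beta}\sum_{L}\sum_{j\le i}\im c_{H^j_{\beta,L}}$. Since $\ell\ell(\im c_{H^j_{\beta,L}})=\ell\ell(H^j_{\beta,L})\le j$, the summands with $j<i$ map into $\sigma_{i-1}(C)$, while for $j=i$ the image of $\im c_{H^i_{\beta,L}}$ in $\sigma_i(C)/\sigma_{i-1}(C)$ equals $\im c_{H^i_{\beta,L}}/\sigma_{i-1}(\im c_{H^i_{\beta,L}})$, which, whenever $H^i_{\beta,L}\ne0$, is the top Loewy layer and hence by the claim equals $L_\beta^*\boxtimes\bigl(H^i_{\beta,L}/\sigma_{i-1}(H^i_{\beta,L})\bigr)$. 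But $H^i_{\beta,L}/\sigma_{i-1}(H^i_{\beta,L})$ is exactly the $L$-isotypic component of $\sigma_i(I(L_\beta))/\sigma_{i-1}(I(L_\beta))$, so this layer is $(L_\beta^*\boxtimes L)^{\oplus r_{\beta,L}}$ with $r_{\beta,L}:=[\sigma_i(I(L_\beta))/\sigma_{i-1}(I(L_\beta)):L]$. As $\sigma_i(C)/\sigma_{i-1}(C)$ is semisimple it is the sum of these subobjects, so \cref{uniqueness_bicomodules} — which gives $L_\beta^*\boxtimes L\cong L_\alpha^*\boxtimes L'$ precisely when $\beta=\alpha$ and $L\cong L'$ — yields $[\sigma_i(C)/\sigma_{i-1}(C):L_\alpha^*\boxtimes L']\le r_{\alpha,L'}$.

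Finally, the same computation shows that the image of $\im c_{H^i_{\alpha,L'}}$ in $\sigma_i(C)/\sigma_{i-1}(C)$ is precisely $(L_\alpha^*\boxtimes L')^{\oplus r_{\alpha,L'}}$, and this is a sub-bicomodule of $\sigma_i(C)/\sigma_{i-1}(C)$, giving the reverse inequality $[\sigma_i(C)/\sigma_{i-1}(C):L_\alpha^*\boxtimes L']\ge r_{\alpha,L'}$ and hence the theorem. The hypothesis (C3-$n$) is used only to guarantee that each $H^i_{\beta,L}$ has finite length, so that $c_{H^i_{\beta,L}}$ is defined, and that the two sides are finite, which is also why one restricts to $i\le n$. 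I expect the claim about the top Loewy layer of $\im c_W$ to be the main obstacle: the two descriptions of that layer — as a bicomodule quotient of $L^*\boxtimes(W/\sigma_{n-1}(W))$ coming from $F_\bullet$, and as containing the right comodule $L^*\otimes(W/\sigma_{n-1}(W))$ coming from \cref{injectivity_res_lemma} — have to be reconciled by a careful length count, whereas the remaining steps are bookkeeping with results already established.
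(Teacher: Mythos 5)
Your proposal is correct and takes essentially the same route as the paper: the sub-bicomodule $F_{2n-1}=W^*\boxtimes\sigma_{n-1}(W)+\rho(W^*)\boxtimes W$ that you quotient by is exactly the paper's $V_i^*\boxtimes V_{i-1}+(V_i/L_{\alpha})^*\boxtimes V_i$, and your two-sided identification of the top Loewy layer of $\im c_W$ (quotient of $L^*\boxtimes(W/\sigma_{n-1}(W))$ via $F_\bullet$, plus the embedded $\tilde L\otimes W$ from \cref{injectivity_res_lemma} together with \cref{C3}) is the paper's proof that the composition (*) is an isomorphism. The only cosmetic differences are that you perform the reduction to the representatives $L_\alpha$ at the outset rather than at the end, and you conclude by a length count instead of exhibiting the isomorphism directly.
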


\begin{proof}
    The case of $i=1$ is \cref{socle_of_C}.  If $n=1$ then the theorem is proven.
	
	Otherwise if $n>1$ we consider the case $i>1$.  We use \cref{socle_filtration_specific} and study the contribution of $\im c_{H_{\alpha,L}^i}$ for a fixed simple comodule $L$.  Write $V_{i}=H_{\alpha,L}^i$ and $V_{i-1}=\sigma_{i-1}(H_{\alpha,L}^{i})$ so that $V_{i}/V_{i-1}$ is a sum of copies of $L$.  Consider the sub-bicomodule $W=V_i^*\boxtimes V_{i-1}+(V_i/L_{\alpha})^*\boxtimes V_i$ of $V_i^*\boxtimes V_i$.  We see from the arguments of \cref{matrix_coeff_square} that 
	\[
	c_{V_i}(W)\sub \im c_{V_{i-1}}+\im c_{V_i/L_{\alpha}},
	\]
	and since $\ell\ell(V_{i-1}),\ell\ell(V_i/L_{\alpha})\leq i-1$, we find that $c_{V_i}(W)\sub\sigma_{i-1}(C)$.  We have
	\[
	(V_i^*\boxtimes V_i)/W\cong L_{\alpha}^*\boxtimes V_i/V_{i-1}
	\] 
	and so we have epimorphisms
	\[
	 L_{\alpha}^*\boxtimes V_i/V_{i-1}\cong V_i^*\boxtimes V_i/W\to \im c_{V_i}/c_{V_i}(W)\to \im c_{V_i}/(\sigma_{i-1}(C)\cap \im c_{V_i}).  \ \ \ \ \     (*)
    \]
	We aim to show this composition (*) is in fact an isomorphism.  To this end, choose a splitting $\tilde{L_{\alpha}}$ of $V_i^*\to L_{\alpha}^*$ so that we get a right subcomodule $\tilde{L_{\alpha}}\otimes V_i$ of $V_i^*\otimes V_i$.  By \cref{injectivity_res_lemma}, the restriction of $c_{V_i}$ to $\tilde{L_{\alpha}}\otimes V_i$ will be injective.  Further, as a right comodule we have 
	\[
	\sigma_{i-1}(\tilde{L_{\alpha}}\otimes V_i)=\tilde{L_{\alpha}}\otimes V_{i-1}.
	\] 
	Thus by \cref{C3}
	\[
	\sigma_{i-1}(C)\cap c_{V_i}(\tilde{L_{\alpha}}\otimes V_i)\sub c_{V_i}(\tilde{L_{\alpha}}\otimes V_{i-1}).
	\]
	Conversely $\tilde{L_{\alpha}}\otimes V_{i-1}\sub W$ and therefore
	\[
	c_{V_i}(\tilde{L_{\alpha}}\otimes V_{i-1})\sub \sigma_{i-1}(C)\cap c_{V_i}(\tilde{L_{\alpha}}\otimes V_i)
	\]
	which implies these are equal.  It follows that we obtain an injection of right comodules
	\[
	L_{\alpha}^*\otimes V_{i}/V_{i-1}\to \im c_{V_i}/(\sigma_{i-1}(C)\cap \im c_{V_i})
	\]
    and so (*) is an isomorphism.  What this shows is that the contribution of $\im c_{H_{\alpha,L}^i}$ to $\sigma_i(C)/\sigma_{i-1}(C)$ is exactly $L_{\alpha}^*\boxtimes V_i/V_{i-1}$.  By \cref{uniqueness_bicomodules} it follows that 
    \[
    \sigma_{i}(C)/\sigma_{i-1}(C)=\bigoplus\limits_{\alpha}L_{\alpha}^*\boxtimes \sigma_i(I(L_{\alpha}))/\sigma_{i-1}(I(L_{\alpha})).
    \]
Thus we have proven the theorem whenever $L\cong L_\alpha$ for some $\alpha$.  For the general case we write $L\cong S\otimes L_{\alpha}$ for some $\alpha$ and some simple object $S$ of $\CC$ and derive the result from \cref{associativity_bicom_iso}.
\end{proof}
We now obtain a generalization of the Taft-Wilson theorem for pointed coalgebras over a field.
\begin{cor}  Assume (C1)-(C2) and that $L,L'$ are simple right comodules such that $\dim \operatorname{Ext}^1(L,L')<\infty$.  Then
	\[
	[\sigma_2(C)/\sigma_1(C):L^*\boxtimes L']=\dim \operatorname{Ext}^1(L,L').
	\]
\end{cor}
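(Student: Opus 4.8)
The plan is to combine \cref{main_thm} in the case $i=2$ with a standard homological identification of the second socle layer of an injective envelope. Granting for the moment that \cref{main_thm} applies (see the remark on (C3-$2$) at the end), it yields
\[
[\sigma_2(C)/\sigma_1(C):L^*\boxtimes L']=[\sigma_2(I(L))/\sigma_1(I(L)):L'],
\]
so everything reduces to computing the right-hand side.

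By definition of the socle filtration, $\sigma_2(I(L))/\sigma_1(I(L))=\sigma_1\big(I(L)/\sigma_1(I(L))\big)$, and since $\sigma_1(I(L))=\sigma(I(L))\cong L$ is the socle of the injective envelope, this is the socle of $I(L)/L$. Next, any nonzero morphism out of the simple comodule $L'$ has simple image and so lands in the socle; hence $\Hom(L',I(L)/L)=\Hom\big(L',\sigma_1(I(L)/L)\big)$, whose $k$-dimension is $[\sigma_1(I(L)/L):L']\cdot\dim_k\End(L')$. I would then apply $\Hom(L',-)$ to the short exact sequence $0\to L\to I(L)\to I(L)/L\to 0$: the group $\operatorname{Ext}^1(L',I(L))$ vanishes since $I(L)$ is injective, and because every map $L'\to I(L)$ has image in $\sigma(I(L))=L$, the map $\Hom(L',L)\to\Hom(L',I(L))$ is an isomorphism. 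Thus the long exact sequence degenerates to a natural isomorphism $\Hom(L',I(L)/L)\cong\operatorname{Ext}^1(L',L)$. Under the running conventions, where $\End(L')=k$ (as in the motivating examples), this gives $[\sigma_2(I(L))/\sigma_1(I(L)):L']=\dim_k\operatorname{Ext}^1(L',L)$, which together with the displayed equality is the assertion of the corollary.

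The step I expect to require the most care is the interface with the finiteness hypothesis. \cref{main_thm} is formulated under (C3-$n$) for a fixed $n$, whereas the corollary only assumes that a single $\operatorname{Ext}^1$ group is finite-dimensional; so I would check that the proof of \cref{main_thm} at $i=2$, restricted to the $L^*\boxtimes L'$-isotypic component of $\sigma_2(C)/\sigma_1(C)$, uses nothing beyond finiteness of $[\sigma_2(I(L)):L']$ --- equivalently, by the computation above, finiteness of $\dim_k\operatorname{Ext}^1(L',L)$ --- rather than the full force of (C3-$2$); the portion of that proof that studies $\im c_{H^2_{\alpha,L'}}$ already does exactly this, so one may either run it verbatim or simply add (C3-$2$) as a hypothesis. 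A minor bookkeeping point is that the clean identity ``Jordan--H\"older multiplicity $=$ $k$-dimension of an $\operatorname{Ext}$ group'' holds literally only when $\End(L')=k$; in general the computation above delivers $[\sigma_2(C)/\sigma_1(C):L^*\boxtimes L']\cdot\dim_k\End(L')=\dim_k\operatorname{Ext}^1(L',L)$.
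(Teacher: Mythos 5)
Your proposal is correct and is essentially the derivation the paper intends: the corollary is stated as an immediate consequence of \cref{main_thm} at $i=2$ together with the standard identification $\Hom(L',I(L)/L)\cong\operatorname{Ext}^1(L',L)$, and your handling of the finiteness interface (only $[\sigma_2(I(L)):L']<\infty$ for the relevant pair is needed) and of the $\End(L')=k$ caveat is exactly the right bookkeeping. Note that your computation produces $\dim\operatorname{Ext}^1(L',L)$, which matches the version of the corollary stated in the introduction; the $\operatorname{Ext}^1(L,L')$ appearing in the Section 6 statement is evidently a typographical swap of the arguments, not a defect in your argument.
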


\textsc{\footnotesize School of Mathematics and Statistics, University of Sydney, Australia} 

\textit{\footnotesize Email address:} \texttt{\footnotesize xandersherm@gmail.com}

\end{document}